\newtheorem{defn}{Definition}[section]
\newtheorem{thm}[defn]{Theorem}
\newtheorem{lem}[defn]{Lemma}
\newtheorem{ques}[defn]{Question}
\newtheorem{statement}[defn]{Statement}
\newtheorem*{prob}{Open problem}
\newtheorem{cor}[defn]{Corollary}
\newtheorem{prop}[defn]{Proposition}
\newtheorem{rem}[defn]{Remark}
\newcommand{\N}{\mathbb{N}}
\newcommand{\C}{\mathbb{C}}
\newcommand{\T}{\mathbb{T}}
\newcommand{\om}{\otimes_{min}}
\newcommand{\ncb}{\|_{cb}}
\begin{document}
\title{On the $\text{OUMD}$ property for the column Hilbert space $C$}
\author{Yanqi QIU}
\address{Equipe d'Analyse, Universit\'e Paris VI, Paris Cedex 05, France}
\email{yanqi-qiu@math.jussieu.fr}
\keywords{column Hilbert space, operator space $\text{OUMD}$ property, noncommutative $L_p$ spaces}
\begin{abstract}
The operator space $\text{OUMD}$ property was introduced by Pisier in the context of vector-valued noncommutative $L_p$-spaces. It is an open problem whether the column Hilbert space has this property. Based on some complex interpolation techniques, we are able to relate this problem to the study of the $\text{OUMD}_q$ property for non-commutative $L_p$-spaces.
\end{abstract}
\maketitle

\section{Introduction}
In Banach space valued martingale theory, the $\text{UMD}$ property plays an important role. Let us recall briefly the definition of the $\text{UMD}$ property. Let $1<p<\infty$. A Banach space $B$ is $\text{UMD}_p$, if there exists a positive constant which depends only on $p$ and the Banach space $B$ (the best one is usually denoted by $\beta_p(B)$), such that for all positive integers $n$, all sequences $\varepsilon = (\varepsilon_k)_{k=1}^n$ of numbers in $\{ -1, 1 \}$ and all $B$-valued martingale difference sequences $dx = (dx_k)_{k=1}^n$, we have
\begin{displaymath}
\| \sum_{k=1}^n \varepsilon_k dx_k \|_{L_p(B)} \leq \beta_p(B) \|\sum_{k=1}^n dx_k \|_{L_p(B)}.
\end{displaymath}
The $\text{UMD}$ property has very deep connections with the boundedness of certain singular integral operators such as the Hilbert transform, see e.g. Burkholder's article \cite{Burkholder3}. Burkholder and McConnell \cite{Burkholder1} proved that if a Banach space $B$ is $\text{UMD}_p$, then the Hilbert transform is bounded on the Bochner space $L_p(\T,m; B)$. Bourgain \cite{Bourgain} showed that if the Hilbert transform is bounded on $L_p(\T, m; B)$, then $B$ is $\text{UMD}_p$. The fact that the $\text{UMD}$ property is independent of $p$ was first proved by Pisier (using the Burkholder-Gundy extrapolation techniques). More precisely, he proved that the finiteness of $\beta_p(B)$ for some $ 1 < p < \infty$ implies its finiteness for all $ 1 < p < \infty $. Examples of $\text{UMD}$ spaces include all the finite dimensional Banach spaces, the Schatten $p$-classes $S_p$ and more generally the noncommutative $L_p$-spaces associated to a von Neumann algebra $\CMcal{M}$, for all $1<p<\infty$. The readers are referred to Burkholder \cite{Burkholder2, Burkholder3} for information on $\text{UMD}$ spaces.

In his monograph \cite{Pisier2}, Pisier developed a theory of vector-valued noncommutative $L_p$-spaces. For a given hyperfinite von Neumann algebra $\CMcal{M}$ equipped with an normal, semifinite, faithful trace $\tau$ and a given operator space $E$, he defined the space $L_p(\tau;E)$. In the case where $\CMcal{M} = B(\ell_2)$ equipped with the usual trace, this space is denoted by $S_p[E]$. The readers are referred to Pisier's book \cite{Pisier1} for the details on operator space theory. Noncommutative conditional expectations and martingales arise naturally in this setting. Following Pisier, we say that an operator space $E$ is $\text{OUMD}_p$ for some $1 < p < \infty$, if there exists a constant (as before, the best one is usually denoted by $\beta_p^{os}(E)$, which depends on $p$ and the operator space structure on $E$) such that for any interger $n \geq 1$, any $\varepsilon = (\varepsilon_k)_{k=1}^n \in \{ -1, 1 \}^n$, and any $E$-valued noncommutative martingale $(f_k)_{k=0}^n$ in $L_p(\tau;E)$ associated to any increasing filtration $(\CMcal{M}_k)_{k=1}^n$, we have
\begin{displaymath}
\| f_0 + \sum_{k=1}^n \varepsilon_k (f_k - f_{k-1}) \|_{L_p(\tau; E)} \leq \beta_p^{os}(E) \|f_n\|_{L_p(\tau;E)}.
\end{displaymath}

The first remarkable result concerning the $\text{OUMD}$ property was proved in \cite{Pisier_Xu} and \cite{Pisier_Xu_en} by Pisier and Xu, where they proved that all the noncommutative $L_p$-spaces are $\text{OUMD}_p$. In particular, the Schatten $p$-class $S_p$ is $\text{OUMD}_p$. By a well known (but still unpublished) result of Musat, an operator space $E$ is $\text{OUMD}_p$ if and only if $S_p[E]$ is $\text{UMD}$.

The following question is well-known to the experts, it is due to Z.-J. Ruan. 
\begin{ques}\label{ruan}
Does the column Hilbert space $C$ have $\text{OUMD}_p$ property for some or all $ 1 < p < \infty$?
\end{ques}

The main theorem of this paper is:

\begin{thm}\label{mainthm}
Let $1 < p< \infty$, then there exist $1< u, v< \infty$, such that we have an isometric (Banach) embedding $$ S_p[C] \hookrightarrow S_u[S_v].$$
\end{thm}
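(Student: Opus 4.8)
The plan is to present both $S_p[C]$ and a suitable subspace of $S_u[S_v]$ as complex interpolation spaces built over the \emph{same} pair of maps, so that an isometric embedding available at the two endpoints of the scale can be transported to the interior by interpolation. Two facts from \cite{Pisier2} provide the machinery. First, the scale $S_p[E]$ is defined by complex interpolation, $S_p[E]=[S_\infty[E],S_1[E]]_{1/p}$, and the completely contractive, self-adjoint projection $P$ that compresses $S_q$ onto its first column of matrix units is compatible with the whole scale $(S_q)_{1<q<\infty}$; hence the Banach space $S_q[C]$ is $1$-complemented in $S_q$ via $P$, and $[S_{q_0}[C],S_{q_1}[C]]_\theta=P([S_{q_0},S_{q_1}]_\theta)=S_{q_\theta}[C]$ whenever $1/q_\theta=(1-\theta)/q_0+\theta/q_1$. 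Second, Pisier's interpolation identities for the bifunctor $(a,E)\mapsto S_a[E]$ give $[S_{a_0}[S_{b_0}],S_{a_1}[S_{b_1}]]_\theta=S_u[S_v]$ with $1/u=(1-\theta)/a_0+\theta/a_1$ and $1/v=(1-\theta)/b_0+\theta/b_1$ (once the two couples are realised compatibly); combined with the factorization formulas describing $S_a[S_b]$ concretely, this lets one identify an interpolation space of mixed Schatten classes as again a mixed Schatten class.

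With these at hand I proceed as follows. Fix $1<q_0<p<q_1<\infty$ and $\theta$ with $1/p=(1-\theta)/q_0+\theta/q_1$, so that $S_p[C]=[S_{q_0}[C],S_{q_1}[C]]_\theta$. I then produce a single linear map $J$, defined on $S_{q_0}[C]+S_{q_1}[C]$, and a single linear left inverse $Q$, such that at each endpoint $J: S_{q_i}[C]\to S_{a_i}[S_{b_i}]$ is isometric, $Q: S_{a_i}[S_{b_i}]\to S_{q_i}[C]$ is contractive, and $QJ=\mathrm{id}$. The most economical choice uses Pisier's identification of the Banach space $S_{q_i}[C]$ with the first-column subspace of $S_{q_i}(\ell_2\otimes\ell_2)\cong S_{q_i}$: one lets $J$ be the isometric inclusion $S_{q_i}=S_{q_i}[\mathbb{C}]\hookrightarrow S_{q_i}[S_{b_i}]$ (so $a_i=q_i$) and $Q$ the associated norm-one conditional expectation followed by $P$; more freedom in the target exponents is obtained by instead matching the factorization description of $S_{q_i}[C]$ against that of $S_{a_i}[S_{b_i}]$ with $a_i,b_i\neq q_i$. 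Since $J$ and $Q$ are contractive at both endpoints they remain contractive on the interpolated spaces, and $QJ=\mathrm{id}$ then forces $\|Jx\|=\|x\|$ on $S_p[C]$. By the second fact, $[S_{a_0}[S_{b_0}],S_{a_1}[S_{b_1}]]_\theta=S_u[S_v]$ with $u,v$ given by the exponent relations and $1<u,v<\infty$; hence $S_p[C]=[S_{q_0}[C],S_{q_1}[C]]_\theta\hookrightarrow S_u[S_v]$ isometrically.

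The step I expect to be the main obstacle is preserving \emph{isometry} under interpolation: an isometric embedding of a couple yields only a contraction between the interpolation spaces, so the argument really needs the norm-one retraction $Q$ compatible with both couples, and the substance of the proof is to engineer the endpoint data $(J,Q)$ --- and the compatible realisations of $S_{a_i}[S_{b_i}]$ --- so that this succeeds while keeping $a_i,b_i$, hence $u,v$, strictly inside $(1,\infty)$. A secondary point is the identification $[S_{a_0}[S_{b_0}],S_{a_1}[S_{b_1}]]_\theta=S_u[S_v]$, which relies on Pisier's bifunctorial interpolation results and the factorization formulas for $S_a[S_b]$, plus routine exponent bookkeeping. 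If one is content to quote Pisier's computation of $S_p[C]$ as the first-column subspace of $S_p(\ell_2\otimes\ell_2)$, the scheme degenerates to the one-step embedding $S_p[C]\subseteq S_p(\ell_2\otimes\ell_2)\cong S_p=S_p[\mathbb{C}]\hookrightarrow S_p[S_v]$ with $u=p$ and any $1<v<\infty$; the interpolation machinery is what is needed to reach other pairs $(u,v)$.
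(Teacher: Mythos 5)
Your scheme never gets off the ground because the endpoint data $(J,Q)$ is exactly the content of the theorem, and the one concrete candidate you offer for it is based on a false identification. It is not true that $S_q[C]$ is (isometrically) the first-column subspace of $S_q(\ell_2\otimes\ell_2)$: that subspace, with its induced structure, is $S_q[C_q]=C_q\otimes_h C_q\otimes_h R_q\simeq C_q\otimes_h R_q=S_q$, whereas $S_q[C]=S_q[C_\infty]=C_q\otimes_h C_\infty\otimes_h R_q$, and $C_\infty\neq C_q$ as operator spaces. Concretely, for $u=\sum_{k\le n}e_{k1}\otimes e_{k1}$ one has $\|u\|_{S_1(\ell_2\otimes\ell_2)}=\sqrt n$ but $\|u\|_{S_1[C]}=n$ (the dual ball of $S_1[C]$ is the set of block \emph{rows} $(b_k)$ with $\|\sum b_kb_k^*\|\le 1$, not block columns). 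So the projection $P$ onto the first column is not a compatible norm-one retraction realizing $S_q[C]$ inside $S_q$, and your ``degenerate one-step embedding'' $S_p[C]\subseteq S_p(\ell_2\otimes\ell_2)\cong S_p\hookrightarrow S_p[S_v]$ fails outright --- if it were correct, $S_p[C]$ would be $\mathrm{UMD}$ and Ruan's question would be settled trivially. With that collapse, what remains of your argument is the meta-statement ``find isometric embeddings with a common contractive retraction at two endpoints and interpolate,'' with the construction of those embeddings explicitly deferred; but constructing them at the endpoints $q_0,q_1$ is the same problem as the one being solved, so the scheme is circular as written.

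The actual proof works at the level of the three-fold Haagerup tensor product rather than with retractions. One writes $S_p[C]=C_p\otimes_h C_\infty\otimes_h C_{p'}$ and, via Kouba's theorem, realizes it as $(C_\infty\otimes_h C_\infty\otimes_h C_\infty,\;C_q\otimes_h C_\infty\otimes_h C_r)_\theta$ for suitable $q,r,\theta$. The key trick is that $C_\infty\otimes_h C_\infty\otimes_h C_\infty$ and $C_1\otimes_h C_1\otimes_h C_1$ are the \emph{same} Banach space ($\ell_2$, isometrically, via the natural basis identification), so the $\infty$-endpoint of the couple may be replaced by the $1$-endpoint without changing the interpolated Banach space; re-interpolating then produces $C_{u_1}\otimes_h C_{u_2}\otimes_h C_{u_3}$ with all three exponents strictly inside $(1,\infty)$, which is recognized (using $C_a=R_{a'}$ and $C_a\otimes_h R_a=S_a$) as $S_v\otimes_h R_u\subset C_u\otimes_h S_v\otimes_h R_u=S_u[S_v]$. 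This endpoint-swapping isometry is the idea your proposal is missing, and no amount of bookkeeping with the reiteration identity $[S_{a_0}[S_{b_0}],S_{a_1}[S_{b_1}]]_\theta=S_u[S_v]$ (which is correct but not where the difficulty lies) substitutes for it.
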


In principle, this embedding theorem should solve the Ruan problem, since in  \cite{Musat1}, Musat presented a proof of the following:

\begin{statement}\label{Sq}
For any $1< p, q < \infty$, the operator space $S_q$ is $\text{OUMD}_p$. In particular, as a Banach space, $S_p[S_q]$ is $\text{UMD}$.\end{statement}

However, very recently, Javier Parcet found a gap in the proof of the above result. The main gap is in the proof of Proposition 4.10 in \cite{Musat1}, where the system (4.19) and (4.20) has solution only for $\frac{3}{2} < p < 3$. For the reader's convenience, we reproduce in the appendix the results in \cite{Musat1} which are not affected by this gap. 

Since Statement \ref{Sq} is in doubt, the answer to Ruan's question might be negative. Actually, this would be more interesting, since it would then imply that the $\text{OUMD}_p$ property depends on $p$. Whether the $\text{OUMD}_p$ property depends on $p$ or not is one of the main open problems in this direction.

We also prove the following non-embeddability result, showing that the obtained embedding result in Theorem \ref{mainthm} can not be extended to the category of operator spaces.

\begin{thm}\label{bisthm}
Let $1< p_1, p_2, \cdots, p_n < \infty$. The operator space $C$ can not be embedded completely isomorphically into any quotient of $S_{p_1}[S_{p_2}[\cdots [S_{p_n}]\cdots ]]$.
\end{thm}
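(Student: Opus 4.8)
The strategy is to isolate a numerical invariant that is large for $C$, does not increase under passing to subspaces or quotients, and is small for the spaces $S_{p_1}[S_{p_2}[\cdots[S_{p_n}]\cdots]]$. For an operator space $X$ and an integer $m\ge 1$ put
\[
 \alpha_m(X):=\sup\Big\{\tfrac{\|(x_i)_{i=1}^m\|_{M_{m,1}(X)}}{\|(x_i)_{i=1}^m\|_{M_{1,m}(X)}}:\ x_1,\dots,x_m\in X\text{ not all }0\Big\}=\big\|\,\mathrm{id}\colon M_{1,m}(X)\to M_{m,1}(X)\,\big\|,
\]
where $\mathrm{id}$ is the ``flip'' carrying a row to the corresponding column. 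The row and column norms of a finite family depend only on the inclusion $X\subseteq B(H)$ and equal $\|\sum_i x_i x_i^*\|^{1/2}$ and $\|\sum_i x_i^*x_i\|^{1/2}$, so $\alpha_m(X)\le\alpha_m(B(H))=\sqrt m$ for every $X$; and for the column space $C$, with $(e_i)$ its canonical generators, $\|(e_i)_{i\le m}\|_{M_{1,m}(C)}=1$ while $\|(e_i)_{i\le m}\|_{M_{m,1}(C)}=\sqrt m$, whence $\alpha_m(C)=\sqrt m$. Moreover $\alpha_m(Y)\le\alpha_m(X)$ for a subspace $Y\subseteq X$, and $\alpha_m(X/N)\le\alpha_m(X)$ for a quotient: using $M_{1,m}(X/N)=M_{1,m}(X)/M_{1,m}(N)$ one lifts any row of norm $<1$ in $X/N$ to a row of norm $<1$ in $X$, and its column is carried by the contractive map $M_{m,1}(X)\to M_{m,1}(X/N)$ onto the original column.

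Granting this, suppose $C$ embeds completely isomorphically into a quotient $Z/N$ of $Z:=S_{p_1}[S_{p_2}[\cdots[S_{p_n}]\cdots]]$, say via a complete isomorphism $u$ onto its image with $\lambda:=\|u\|_{cb}\|u^{-1}\|_{cb}$. Applying $u$ at the level of $M_{1,m}$ to the row $(e_i)_{i\le m}$, and $u^{-1}$ at the level of $M_{m,1}$ to the column of the images $f_i:=u(e_i)$, gives $\|(f_i)_{i\le m}\|_{M_{1,m}(Z/N)}\le\|u\|_{cb}$ and $\|(f_i)_{i\le m}\|_{M_{m,1}(Z/N)}\ge\sqrt m/\|u^{-1}\|_{cb}$, so $\alpha_m(Z)\ge\alpha_m(Z/N)\ge\sqrt m/\lambda$ for all $m$. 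It therefore suffices to prove that $\alpha_m(Z)=o(\sqrt m)$.

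To estimate $\alpha_m(Z)$, observe that under the canonical identifications $M_{1,m}(S_p[W])=S_p[M_{1,m}(W)]$ the flip on $M_{1,m}(Z)$ is precisely $S_{p_1}\!\big[S_{p_2}\!\big[\cdots S_{p_n}[v]\cdots\big]\big]$, where $v\colon R_m\to C_m$ is the transpose (the identity on $\C^m$). Using the basic functoriality estimate $\|S_p[w]\colon S_p[E]\to S_p[F]\|_{cb}\le\|w\|_{cb}$, we may peel off the outer layers one at a time to obtain
\[
 \alpha_m(Z)\ \le\ \big\|\,S_{p_n}[v]\colon S_{p_n}[R_m]\to S_{p_n}[C_m]\,\big\|_{cb},
\]
so that only the innermost exponent matters.

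It remains to estimate $\|S_q[v]\colon S_q[R_m]\to S_q[C_m]\|_{cb}$ for $1<q<\infty$ by complex interpolation. At $q=\infty$ the map is $\mathrm{id}_{\K}\otimes_{\min}v$, of cb-norm $\|v\colon R_m\to C_m\|_{cb}=\sqrt m$, and the same value occurs at $q=1$ by duality. The decisive point is the case $q=2$: since $M_{a,b}(S_2)$ carries the Hilbert--Schmidt norm, which sees only the $\ell_2$-sum of the $S_2$-norms of the matrix entries, both $S_2[R_m]$ and $S_2[C_m]$ coincide, completely isometrically, with $\ell_2^m(S_2)$, and $S_2[v]$ is a complete isometry. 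Interpolating ($S_q[\,\cdot\,]=[S_\infty[\,\cdot\,],S_2[\,\cdot\,]]_{2/q}$ for $q\ge2$, and $S_q[\,\cdot\,]=[S_1[\,\cdot\,],S_2[\,\cdot\,]]_{\theta}$ with $\theta=2(1-1/q)$ for $q\le2$) gives $\|S_q[v]\|_{cb}\le m^{|1/2-1/q|}$. Hence $\sqrt m/\lambda\le\alpha_m(Z)\le m^{|1/2-1/p_n|}$ for every $m$; but $1<p_n<\infty$ forces $|1/2-1/p_n|<1/2$, so this is impossible for large $m$, which proves the theorem. The main obstacle is this last step — that the row/column flip over $S_q$ is completely bounded with cb-norm $O(m^{1/2-\delta})$, traced through the vector-valued interpolation scale to the complete isometry $S_2[R_m]\cong S_2[C_m]\cong\ell_2^m(S_2)$ — together with checking that the Fubini-type identities and the functoriality of $S_p[\,\cdot\,]$ on cb maps hold in the generality used; all of these are available in Pisier's monograph \cite{Pisier2}.
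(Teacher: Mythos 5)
Your reduction to the invariant $\alpha_m$ is sound as far as it goes: the value $\alpha_m(C)=\sqrt m$, the monotonicity under subspaces and quotients (via $M_{1,m}(X/N)=M_{1,m}(X)/M_{1,m}(N)$), and the transfer through a complete isomorphism are all correct. The proof collapses, however, at both stages of the estimate of $\alpha_m(Z)$. First, the ``canonical identification'' $M_{1,m}(S_p[W])=S_p[M_{1,m}(W)]$ is false. Writing $S_p[W]=C_p\otimes_h W\otimes_h R_p$ and $M_{1,m}(X)=X\otimes_h R_m$, the left-hand side is $C_p\otimes_h W\otimes_h R_p\otimes_h R_m$ while the right-hand side is $C_p\otimes_h W\otimes_h R_m\otimes_h R_p$; the Haagerup tensor product is not commutative, and Proposition 3.1 of the paper quantifies the failure here: $R_p\otimes_h R_\infty\cong C_{p'}\otimes_h C_1$ is isometrically of $S_{2p'}$ type, whereas $R_\infty\otimes_h R_p\cong C_1\otimes_h C_{p'}$ is of $S_{2p/(p+1)}$ type (for $p=2$: $S_4$ versus $S_{4/3}$). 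The discrepancy between these norms on $m$ coordinates is itself a power of $m$, so the ``peeling to the innermost exponent'' step is unavailable even up to constants.

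Second, and decisively, the claimed isometry at $q=2$ is wrong. The identities $S_2[\,\cdot\,]\cong\ell_2^m(S_2)$ that you invoke hold for $R_2^m$ and $C_2^m$, the row and column \emph{subspaces of} $S_2^m$ with their induced operator space structure, not for the homogeneous spaces $R_m=R_\infty^m$ and $C_m=C_\infty^m$ appearing in your flip. Pisier's theorem that $\|\mathrm{Id}_{S_q}\otimes u\|=\|u\|_{cb}$ (quoted in Section 4 of this paper) gives the exact value
\[
\bigl\|\mathrm{Id}_{S_q}\otimes v\colon S_q[R_m]\to S_q[C_m]\bigr\|=\|v\colon R_m\to C_m\|_{cb}=\sqrt m
\]
for \emph{every} $1\le q\le\infty$: there is no gain at $q=2$, the interpolation yields only the trivial bound $\sqrt m$, and no contradiction is reached. (For comparison, the computation in the proof of Theorem \ref{mainthm} gives $S_2[C]\cong S_{4/3}\otimes_h R_4$, which is far from $\ell_2(\ell_2)$.) The paper sidesteps all such quantitative estimates with a qualitative invariant: it tensors the hypothetical embedding on the right with $R$ (injectivity of $\otimes_h$), observes that $C\otimes_h R=S_\infty$ is not super-reflexive, and shows via Kouba interpolation (Lemma \ref{superR}) that $S_{p_1}[\cdots]\otimes_h R$ and its quotients are $\theta$-Hilbertian, hence super-reflexive. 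To salvage your route you would have to compute $\alpha_m(Z)$ by expanding $Z\otimes_h R_m$ and $C_m\otimes_h Z$ as iterated Haagerup tensor products of $C_u$'s and interpolating as in Lemma \ref{superR}, i.e.\ essentially reproduce the paper's argument in quantitative form.
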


\section{Preliminaries}\label{pre}   
By an operator space we mean a closed subspace of $B(H)$ for some complex Hilbert space $H$. When $E \subset B(H)$ is an operator space, we denote by $M_n(E)$ the space of all $n\times n$ matrices with entries in $E$, equipped with the norm induced by $B(\ell_2^n\otimes_2H)$. Let $e_{ij}$ be the element of $B(\ell_2)$ corresponding to the matrix whose coefficients equal to one at the $(i,j)$ entry and zero elsewhere. The column Hilbert space $C$ is defined as
\begin{displaymath}
C = \overline{\textrm{span}}\{ e_{i1}|i \geq 1\}
\end{displaymath} 
and the row Hilbert space $R$ is defined as
\begin{displaymath}
R = \overline{\textrm{span}}\{ e_{1j} | j \geq 1 \}.
\end{displaymath}
Their operator space structures are given by the embeddings $C \subset B(\ell_2)$ and $R \subset B(\ell_2)$.

Z.-J. Ruan in \cite{Ruan} gave an abstract characterization of operator spaces in terms of matrix norms. An abstract operator space is a vector space $E$ equipped with matrix norms $\| \cdot \|_m$ on $M_m(E)$ for each positive integer $m$, satisfying the axioms: for all $x \in M_m(E), y \in M_n(E)$ and $\alpha, \beta \in M_m(\C)$, we have
\begin{displaymath}
\left\|\left( \begin{array}{cc} x & 0 \\ 0 & y\end{array}\right)\right\|_{m+n} = \max\{\|x\|_m, \|y\|_n\}, \qquad \|\alpha x \beta \|_m \leq \|\alpha\|\|x\|_m\|\beta\|. 
\end{displaymath}
This abstract characterization can be used to define various important constructions of new operator spaces from given ones. Among these are the projective tensor product, the quotient, the dual etc. The two constructions used frequently in this paper are the Haagerup tensor product and the complex interpolation for operator spaces. Let us recall briefly their definitions and main properties. 

Let $E, F$ be two operator spaces, the Haagerup tensor product $E\otimes_h F$ of $E$ and $F$ is defined as the completion of $E\otimes F$ with respect to the matrix norms
\begin{displaymath}
\|u\|_{h,m} = \inf \{ \|v \| \| w \|: u = v\odot w, v \in M_{m, r}(E), w \in M_{r,m}(F), r\in\N\},  
\end{displaymath}
where the element $v\odot w \in M_m(E\otimes F)$ is defined by $(v\odot w)_{ij} = \sum_{k=1}^m v_{ik}\otimes w_{kj}$, for all $1 \leq i, j \leq m$. 

We refer to \cite{Interpolation} for details about interpolations of Banach spaces. Now let $E_0, E_1$ be two operator spaces, such that $(E_0, E_1)$ is a compatible couple in the sense of \cite{Interpolation}. Following Pisier, we endow the interpolation space $E_\theta = (E_0, E_1)_\theta$ with a canonical operator space structure by defining for all positive integers $m$,
\begin{displaymath}
M_m(E_\theta) = (M_m(E_0), M_m(E_1))_\theta.
\end{displaymath} 

The Haagerup tensor product is injective, projective, self-dual in the finite dimensional case, however, it is not commutative, that is we do not have $E\otimes_h F = F\otimes_h E$ in general. 

We state the following Kouba's interpolation theorem, which mainly says that the Haagerup tensor product behaves nicely with respect to the complex interpolation, for its proof, see e.g. \cite{Pisier5}.

\begin{thm}[Kouba]\label{Kouba}
Let $(E_0, E_1)$ and $(F_0, F_1)$ be two compatible couples of operator spaces. Then $(E_0 \otimes_h F_0, E_1\otimes_h F_1)$ is a compatible couple. Moreover, for all $0 < \theta < 1$ we have complete isometry
\begin{displaymath}
(E_0\otimes_h F_0, E_1\otimes_h F_1)_\theta = (E_0, E_1)_\theta \otimes_h (F_0, F_1)_\theta.
\end{displaymath}
\end{thm}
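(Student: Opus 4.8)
Write $E_\theta=(E_0,E_1)_\theta$, $F_\theta=(F_0,F_1)_\theta$, and set
\[
G_\theta=E_\theta\otimes_h F_\theta,\qquad H_\theta=(E_0\otimes_h F_0,\,E_1\otimes_h F_1)_\theta.
\]
The compatibility of the couple $(E_0\otimes_h F_0,E_1\otimes_h F_1)$ is routine: the two spaces embed into a common Hausdorff topological vector space built from the ambient spaces of the couples $(E_0,E_1)$ and $(F_0,F_1)$, and the algebraic tensor product $(E_0\cap E_1)\otimes(F_0\cap F_1)$ is a dense subspace of each of $G_\theta$ and $H_\theta$. The asserted complete isometry therefore reduces to the two families of inequalities $\|u\|_{M_N(H_\theta)}\le\|u\|_{M_N(G_\theta)}$ and $\|u\|_{M_N(G_\theta)}\le\|u\|_{M_N(H_\theta)}$, for every $N$ and every $u$ in that algebraic tensor product. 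The plan is to obtain the first family by multiplying analytic functions (the easy direction), and to deduce the second family from the first by a duality argument (the hard direction).

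For the easy direction, fix $u$ with $\|u\|_{M_m(G_\theta)}<1$ and choose a Haagerup factorization $u=v\odot w$ with $v\in M_{m,r}(E_\theta)$, $w\in M_{r,m}(F_\theta)$, both of norm $<1$. By the definition of the operator space structure on an interpolation space, $M_{m,r}(E_\theta)=(M_{m,r}(E_0),M_{m,r}(E_1))_\theta$, so there is a bounded continuous function $V$ on the closed strip $\overline{S}=\{z:0\le\Re z\le1\}$, analytic in the interior, with $V(\theta)=v$ and boundary norms $\sup_t\|V(it)\|_{M_{m,r}(E_0)}<1$ and $\sup_t\|V(1+it)\|_{M_{m,r}(E_1)}<1$; similarly one produces $W$ for $w$. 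The product $U(z)=V(z)\odot W(z)$ is again bounded and analytic, satisfies $U(\theta)=u$, and on each boundary line lies in the appropriate Haagerup tensor product with $\|U(it)\|_{M_m(E_0\otimes_h F_0)}\le\|V(it)\|\,\|W(it)\|<1$ and likewise on $\Re z=1$, directly by the definition of the Haagerup norm. Hence $\|u\|_{M_m(H_\theta)}<1$, which gives $\|u\|_{M_N(H_\theta)}\le\|u\|_{M_N(G_\theta)}$ at every matrix level.

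For the hard direction I would first treat the case where $E_0,E_1,F_0,F_1$ are finite dimensional and then flip the easy inequality by duality. The two inputs are the self-duality of the Haagerup tensor product, namely $(A\otimes_h B)^*=A^*\otimes_h B^*$ completely isometrically in finite dimensions, and Pisier's operator space interpolation duality $((A_0,A_1)_\theta)^*=(A_0^*,A_1^*)_\theta$, which is clean here since finite dimensional spaces are reflexive. Applying the easy direction to the dual couples $(E_0^*,E_1^*)$ and $(F_0^*,F_1^*)$ produces a complete contraction, namely the identity
\[
(E_0^*,E_1^*)_\theta\otimes_h(F_0^*,F_1^*)_\theta\longrightarrow\bigl(E_0^*\otimes_h F_0^*,\,E_1^*\otimes_h F_1^*\bigr)_\theta.
\]
Taking the operator space adjoint yields a complete contraction between the duals; the two displayed dualities together with the finite dimensional isomorphism $A^{**}=A$ identify this adjoint with the identity map $H_\theta\to G_\theta$ under the canonical pairing. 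This is exactly the missing complete contraction, and combined with the easy direction it proves the complete isometry for finite dimensional couples.

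It remains to remove the finite dimensionality assumption, and this is the step that must be handled with care. A fixed $u$ in the algebraic tensor product lies in $E'\otimes F'$ for some finite dimensional $E'\subset E_0\cap E_1$ and $F'\subset F_0\cap F_1$, and one approximates the full-space norms by those computed for suitable finite dimensional subcouples, using the injectivity of $\otimes_h$ together with the fact that the near-optimal functionals and analytic extensions can be captured on finite dimensional pieces; passing to the limit transports the finite dimensional complete isometry to the general case. The genuine obstacle throughout is the hard direction: a Haagerup factorization of $u=U(\theta)$ cannot in general be chosen to depend analytically on $z$, so one cannot simply reverse the multiplicative construction of the easy direction, and it is precisely the self-duality of $\otimes_h$ combined with interpolation duality that allows one to bypass this difficulty.
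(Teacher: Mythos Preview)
The paper does not actually prove this theorem: it states Kouba's result and refers the reader to \cite{Pisier5} for the proof. So there is no in-paper argument to compare against, and your proposal must be judged on its own merits.

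Your easy direction is fine, and your finite dimensional duality argument for the hard direction is correct: both the self-duality $(A\otimes_h B)^*=A^*\otimes_h B^*$ in finite dimensions and the operator space interpolation duality $((A_0,A_1)_\theta)^*=(A_0^*,A_1^*)_\theta$ are available and combine exactly as you say.

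The genuine gap is in the last step, the passage from finite dimensional couples to arbitrary ones. You write that one ``approximates the full-space norms by those computed for suitable finite dimensional subcouples,'' invoking injectivity of $\otimes_h$. But complex interpolation is \emph{not} injective: if $E'\subset E_0\cap E_1$ is a finite dimensional subspace carrying the two induced norms, then in general $(E'_{|E_0},E'_{|E_1})_\theta$ does not sit isometrically inside $(E_0,E_1)_\theta$. Consequently, knowing the complete isometry for every finite dimensional subcouple does not by itself let you recover $\|u\|_{M_N(G_\theta)}$ as a limit of finite dimensional quantities to which your duality argument applies. The injectivity of $\otimes_h$ only controls the Haagerup norms once the interpolation norms on the factors are already fixed; it says nothing about the stability of the interpolation step under restriction.

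Kouba's and Pisier's proofs handle the hard direction differently: rather than reducing to finite dimensional subcouples, they work directly with the analytic description of the interpolation norm and a factorization/duality argument carried out at the level of boundary functions, which avoids the non-injectivity issue. If you want to keep your strategy, you would need either a density/approximation statement that respects interpolation (for instance via complemented finite dimensional pieces, which is not available in general), or to rework the hard direction along those lines.
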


Let us now turn to some basic definitions of vector-valued noncommutative $L_p$-spaces. Let $S_\infty$ be the space of compact operators on $\ell_2$. It is viewed as an operator space by the natural embedding $S_\infty \subset B(\ell_2)$. The finite dimensional version is $S_\infty^n = B(\ell_2^n)$. It is well known that the trace class $S_1$ (resp. $S_1^n$) is the dual space of $S_\infty$ (resp. $S_\infty^n$). By this duality, we equip $S_1$ (resp. $S_1^n$) with the dual operator space structure. 

Let $E$ be an operator space. Following Pisier, the noncommutative vector-valued $L_p$-spaces in the discrete case are defined by
\begin{displaymath}
S_\infty^n[E] = S_\infty^n\om E, \qquad S_\infty[E] = S_\infty \om E,
\end{displaymath}
\begin{displaymath}
S_1^n[E] = S_1^n \otimes^\wedge E, \qquad S_1[E] = S_1 \otimes^\wedge E.
\end{displaymath}
It turns out that $(S_\infty[E], S_1[E])$ (resp. $(S_\infty^n[E], S_1^n[E])$) is a compatible couple. For $1 < p < \infty$, the definitions are
\begin{displaymath}
S_p^n[E] = (S_\infty^n[E], S_1^n[E])_{\frac{1}{p}}, \qquad S_p[E] = (S_\infty[E], S_1[E])_{\frac{1}{p}}.
\end{displaymath}

Let $C_p$ and $R_p$ denote respectively the column and the row subspace of $S_p$, for $1 \leq p \leq \infty$. In particular, $C_\infty = C$ and $R_\infty = R$ are the column and row Hilbert space defined as above. By well known results, we have
\begin{displaymath}
S_p = (S_\infty, S_1)_{\frac{1}{p}}.
\end{displaymath}
By this identity, $S_p$ is equipped with an operator space structure, which is called the canonical operator space structure on $S_p$. We endow $C_p$ and $R_p$ with the induced operator space structure by inclusions $C_p \subset S_p$ and $R_p \subset S_p$. Let $p'$ denote the conjugate exponent of $p$, i.e. $\frac{1}{p} + \frac{1}{p'} = 1$. Then the natural identification is a complete isometry
\begin{displaymath}
C_p \simeq R_{p'}.
\end{displaymath}
This identification will be used freely in the sequel. Note also we have complete isometry
\begin{displaymath}
C_p^* \simeq C_{p'} \simeq R_p.
\end{displaymath}
The following complete isometries from \cite{Pisier2} will also be used:
\begin{displaymath}
C_p = (C_\infty, C_1)_{\frac{1}{p}}.
\end{displaymath}
More generally, if $\frac{1}{p_\theta} = \frac{1-\theta}{p_0} + \frac{\theta}{p_1}$, then 
\begin{displaymath}
C_{p_\theta} = (C_{p_0}, C_{p_1})_\theta.
\end{displaymath}
With these notations, we have complete isometry
\begin{displaymath}
S_p[E] = C_p\otimes_h E \otimes_h R_p.
\end{displaymath}

The following proposition from \cite{Pisier2} is useful for us.

\begin{prop}\label{description_S_p_E}
Let $E$ be an operator space. Then we have the following complete isometric isomorphism $$ S_p[E] = C_p \otimes_h E \otimes_h R_p. $$
\end{prop}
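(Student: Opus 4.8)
The plan is to establish the two extreme cases $p=\infty$ and $p=1$ and then obtain every intermediate $p$ by complex interpolation. Granting the complete isometries $S_\infty[E] = C_\infty \otimes_h E \otimes_h R_\infty$ and $S_1[E] = C_1 \otimes_h E \otimes_h R_1$, one applies Kouba's theorem (Theorem~\ref{Kouba}) twice --- first to the couples $(C_\infty \otimes_h E, C_1 \otimes_h E)$ and $(R_\infty, R_1)$, then to $(C_\infty, C_1)$ and to the constant couple $(E,E)$ --- and combines the outcome with the identities $S_p[E] = (S_\infty[E], S_1[E])_{1/p}$, $C_p = (C_\infty, C_1)_{1/p}$, $R_p = (R_\infty, R_1)_{1/p}$ recalled above and with $(E,E)_{1/p} = E$ to get
\[ S_p[E] = \bigl( C_\infty \otimes_h E \otimes_h R_\infty,\ C_1 \otimes_h E \otimes_h R_1 \bigr)_{1/p} = C_p \otimes_h E \otimes_h R_p . \]
So the whole content lies in the two endpoints.

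For $p=\infty$ recall that $S_\infty[E] = S_\infty \om E$ by definition. The key point is that, with $C$ on the \emph{left} and $R$ on the \emph{right}, the Haagerup tensor product collapses onto the minimal one: for every operator space $X$ there are complete isometries $C \otimes_h X = C \om X$ and $X \otimes_h R = X \om R$. I would verify this directly from the definition of the Haagerup matrix norm. One always has $\| \cdot \|_{\otimes_h} \geq \| \cdot \|_{\om}$; conversely, writing $u \in M_m(C \otimes X)$ as $u = \sum_i e_{i1} \otimes x_i$, one can factor $u = v \odot w$ with $v$ a coordinate embedding of norm $1$ and $w$ the column obtained by stacking the $x_i$, and since the minimal-tensor norm of $u$, computed in $M_m(C \om X) = C \om M_m(X)$, is exactly the column norm $\| \sum_i x_i^{*}x_i \|^{1/2}$, this gives $\|u\|_{\otimes_h} \leq \|u\|_{\om}$; the case of $X \otimes_h R$ is the mirror computation with rows. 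Granting this, and using $C \om R = S_\infty$ (both sides being the compact operators on $\ell_2$, merely relocated inside $B(\ell_2) \om B(\ell_2)$) together with the associativity and commutativity of $\om$, one obtains
\[ C \otimes_h E \otimes_h R \;=\; C \om E \om R \;=\; (C \om R) \om E \;=\; S_\infty \om E \;=\; S_\infty[E] , \]
whose finite-level form is the familiar identification $M_n(E) = C_n \otimes_h E \otimes_h R_n$.

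For $p=1$ I would argue by duality at the finite level. Using $C_1 \simeq R$ and $R_1 \simeq C$ and passing to $n$-dimensional corners, it suffices to prove $R_n \otimes_h E \otimes_h C_n = S_1^n[E]$ for every $n$, where $S_1^n[E] = S_1^n \otimes^\wedge E$. On one hand $(S_1^n \otimes^\wedge E)^{*} = CB(S_1^n, E^{*}) = M_n(E^{*}) = S_\infty^n[E^{*}]$, which by the case $p=\infty$ applied to $E^{*}$ equals $C_n \otimes_h E^{*} \otimes_h R_n$. On the other hand, since the outer Haagerup factors are finite-dimensional one has the \emph{exact} self-duality $(R_n \otimes_h E \otimes_h C_n)^{*} = R_n^{*} \otimes_h E^{*} \otimes_h C_n^{*} = C_n \otimes_h E^{*} \otimes_h R_n$. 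The canonical complete contraction $S_1^n \otimes^\wedge E \to R_n \otimes_h E \otimes_h C_n$ (available because $\otimes^\wedge$ dominates every operator space cross norm) then has the identity of $M_n(E^{*})$ for its adjoint, once the two duality pairings are checked to agree, so it is a complete isometry. Finally, since $R_n \subset R$, $C_n \subset C$ and $S_1^n \subset S_1$ are completely complemented and both $\otimes_h$ and $\otimes^\wedge$ respect completely complemented subspaces and closures of increasing unions, letting $n \to \infty$ yields $C_1 \otimes_h E \otimes_h R_1 = S_1[E]$.

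The hard part is this $p=1$ step, where the tidy algebraic identities hide some functional-analytic bookkeeping. The duality argument is cleanest when $E$ is finite-dimensional (all spaces then reflexive, so ``same dual'' means ``same space''); extending it to a general, possibly non-reflexive, infinite-dimensional $E$ requires care, because $\otimes^\wedge$ is not injective and one therefore cannot simply approximate $E$ by its finite-dimensional subspaces. Instead one keeps $E$ intact, works at matrix level $n$, and relies on the Haagerup self-duality $(R_n \otimes_h E \otimes_h C_n)^{*} = C_n \otimes_h E^{*} \otimes_h R_n$ being an exact equality precisely because the outer factors are finite-dimensional. One must also make the passage from ``same dual'' to ``completely isometric'' rigorous by matching the pairings explicitly, and check the compatibility of the two interpolation couples used in the first paragraph. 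The algebraic skeleton --- the interpolation reduction, the $C$/$R$ collapse of $\otimes_h$ onto $\om$, and the endpoint duality computations --- is, by contrast, routine.
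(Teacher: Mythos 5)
The paper does not actually prove this proposition: it is quoted verbatim from Pisier's memoir \cite{Pisier2} (and indeed the identity $S_p[E]=C_p\otimes_h E\otimes_h R_p$ is already listed a few lines earlier among the facts imported from that reference). So there is no in-paper argument to compare against; what you have written is a reconstruction of the standard proof, and it is essentially the right one. The $p=\infty$ endpoint via the collapse $C\otimes_h X=C\om X$, $X\otimes_h R=X\om R$ and $C\om R=S_\infty$ is correct, as is the reduction of $1<p<\infty$ to the endpoints by two applications of Kouba's theorem together with $(E,E)_{1/p}=E$ (modulo the compatibility of the endpoint identifications, which you rightly flag and which holds because all maps are the canonical ones on the algebraic tensor product). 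The adjoint argument at $p=1$ is also sound: a complete contraction with dense range whose adjoint is a surjective complete isometry is itself a complete isometry.

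One step deserves more care than your parenthetical justification gives it. The existence of the canonical complete contraction $S_1^n\otimes^\wedge E\to R_n\otimes_h E\otimes_h C_n$ does \emph{not} follow merely from ``$\otimes^\wedge$ dominates every operator space cross norm'': the target places $E$ \emph{between} the two Hilbertian factors, and $\otimes_h$ is noncommutative, so what is needed is that the shuffle $e_{ij}\otimes x\mapsto e_{1i}\otimes x\otimes e_{j1}$ is completely contractive from $(R_n\otimes_h C_n)\otimes^\wedge E$ into $R_n\otimes_h E\otimes_h C_n$. This is true (it is the jointly complete contractivity of the associated bilinear map, or dually the fact that the transposed map on $C_n\otimes_h E^*\otimes_h R_n=M_n(E^*)$ is the identity), but it is exactly the point where the ``bookkeeping'' you mention lives, and it should be verified rather than asserted. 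With that step filled in, and the routine check that the two duality pairings coincide on $S_1^n\otimes E$, your argument is complete and agrees with the proof in \cite{Pisier2}.
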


By a well known (but still unpublished) result of Musat, an operator space $E$ is $\text{OUMD}_p$ if and only if $S_p[E]$ is $\text{UMD}$, hence Question \ref{ruan} is equivalent to the following
\begin{ques}
Let $1< p< \infty$. Does the Banach space $S_p[C]$ have $\text{UMD}$ property?
\end{ques}

\section{The Banach space $S_p[C]$ }\label{main}

We now turn to the proof of Theorem \ref{mainthm}. Let us begin by a simple proposition. 
\begin{prop}
Let  $1 \le u, v \le \infty$. Then $C_u \otimes_h C_v$ is isometric to a Schatten $p$-class $S_p$ for certain $1\le p \le \infty$. In particular, either $1 \le u, v < \infty$ or $1 < u, v \le \infty$, the space $C_u \otimes_h C_v$ is a $\text{UMD}$ Banach space.
\end{prop}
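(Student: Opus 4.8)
The plan is to identify $C_u \otimes_h C_v$, isometrically as a Banach space, with the Schatten class $S_p$ for $\frac1p = \frac12\bigl(1+\frac1u-\frac1v\bigr)$. Since $\frac1u-\frac1v\in[-1,1]$ we then automatically have $p\in[1,\infty]$, and the ``in particular'' clause will follow from the fact, recalled in the Introduction, that $S_p$ is a UMD Banach space exactly when $1<p<\infty$. The only genuinely new ingredient is the base case $C\otimes_h C\cong S_2$; everything else is extrapolation from it via Kouba's theorem (Theorem \ref{Kouba}) and the classical isometric interpolation identity $(S_{p_0},S_{p_1})_\theta=S_{p_\theta}$, together with the instances $C\otimes_h R=C_\infty\otimes_h R_\infty\cong S_\infty$ and $R\otimes_h C=C_1\otimes_h R_1\cong S_1$ of Proposition \ref{description_S_p_E} applied to $E=\C$ (here and below I use $C_\infty=C$, $C_1\simeq R$ and $C_p\simeq R_{p'}$).

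For the base case I would compute the Haagerup matrix norm of $C\otimes_h C$ at the first level directly. A finitely supported tensor $\sum_{i,j}a_{ij}\,e_{i1}\otimes e_{j1}$ is encoded by a finite matrix $A=[a_{ij}]$, and a factorization $u=v\odot w$ with $v\in M_{1,r}(C)$, $w\in M_{r,1}(C)$ corresponds to a factorization $A=\alpha\beta$ through $\ell_2^r$. The one point requiring care is that the two factor norms are of different types: letting the relevant operators act on $\ell_2$ and noting that only the first coordinate of the input survives, one finds $\|v\|_{M_{1,r}(C)}=\|\alpha\|_{B(\ell_2^r,\ell_2)}$ but $\|w\|_{M_{r,1}(C)}=\|\beta\|_{S_2}$, where $\alpha$ is the $\infty\times r$ matrix whose columns are the coordinate vectors of the $v_k$ and $\beta$ the $r\times\infty$ matrix whose rows are those of the $w_k$. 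Hence the Haagerup norm of the tensor is $\inf\{\|\alpha\|\,\|\beta\|_{S_2}:A=\alpha\beta\}$, and this infimum equals $\|A\|_{S_2}$: the inequality $\ge$ is the operator ideal estimate $\|\alpha\beta\|_{S_2}\le\|\alpha\|\,\|\beta\|_{S_2}$, and $\le$ is witnessed by the factorization $A=U(\Sigma V^{*})$ coming from the singular value decomposition, for which $\|U\|\le1$ and $\|\Sigma V^{*}\|_{S_2}=\|A\|_{S_2}$. Passing to the completion gives $C\otimes_h C\cong S_2$, and an entirely analogous computation (or passage to complex conjugates, using $R=\overline{C}$ and $\overline{S_2}\cong S_2$) gives $R\otimes_h R\cong S_2$.

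Next I would extrapolate. Writing $C_u=(C_\infty,C_1)_{1/u}=(C,R)_{1/u}$ and applying Kouba's theorem against the constant second factor $C$ yields $C_u\otimes_h C=(C\otimes_h C,\,R\otimes_h C)_{1/u}=(S_2,S_1)_{1/u}=S_a$ with $\frac1a=\frac12+\frac1{2u}$; similarly $C_u\otimes_h R=(C\otimes_h R,\,R\otimes_h R)_{1/u}=(S_\infty,S_2)_{1/u}=S_b$ with $\frac1b=\frac1{2u}$. Interpolating the second variable the same way, $C_u\otimes_h C_v=C_u\otimes_h(C,R)_{1/v}=(C_u\otimes_h C,\,C_u\otimes_h R)_{1/v}=(S_a,S_b)_{1/v}=S_p$, and evaluating $(1-\frac1v)\frac1a+\frac1v\frac1b$ gives exactly $\frac1p=\frac12+\frac1{2u}-\frac1{2v}=\frac12\bigl(1+\frac1u-\frac1v\bigr)$, as claimed. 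The endpoint cases $u\in\{1,\infty\}$ or $v\in\{1,\infty\}$ require no interpolation and are already consistent with this formula.

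It remains to read off the ``in particular'' statement. We have $1<p<\infty$ iff $0<\frac1p<1$ iff $-1<\frac1u-\frac1v<1$, and the only exceptions are $(u,v)=(1,\infty)$ and $(u,v)=(\infty,1)$. Each of the two hypotheses — $1\le u,v<\infty$, or $1<u,v\le\infty$ — excludes both of these pairs, so $1<p<\infty$ and $C_u\otimes_h C_v\cong S_p$ is UMD. The step I expect to be the main obstacle is the base case $C\otimes_h C\cong S_2$: one must correctly identify the asymmetric factor norms on $M_{1,r}(C)$ and $M_{r,1}(C)$ (it is easy to confuse which side produces an operator norm and which a Hilbert--Schmidt norm), after which the factorization lemma and the bookkeeping with Kouba's theorem and the Schatten interpolation scale are routine; it is harmless that this base identification is a priori only a Banach isometry, since complex interpolation of Banach spaces sees only the Banach structure.
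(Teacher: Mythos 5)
Your proof is correct and follows essentially the same route as the paper: interpolate the first factor via Kouba's theorem between $C_\infty\otimes_h C_v$ and $C_1\otimes_h C_v$, then the second factor, reducing everything to the endpoint identities $C\otimes_h C\cong S_2$, $C\otimes_h R\cong S_\infty$, $R\otimes_h C\cong S_1$ and the Schatten interpolation scale, arriving at the same exponent $\frac1p=\frac12\bigl(1+\frac1u-\frac1v\bigr)$. The only difference is that you supply a direct computation of the base case $C\otimes_h C\cong S_2$ (correctly identifying the asymmetric factor norms), which the paper simply quotes as a known isometric identity.
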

\begin{proof}
Define $\theta = \frac{1}{v}, \eta = \frac{1}{u} \in [0, 1]$,  then we have $$\frac{1}{v} = \frac{1-\theta}{\infty} + \frac{\theta}{1},$$ $$\frac{1}{u} =\frac{1-\eta}{\infty} + \frac{\eta}{1} .$$ Then by Kouba's interpolation result,  $$C_u \otimes_h C_\infty = (C_\infty \otimes_h C_\infty, C_1 \otimes_h C_\infty)_\eta.$$ By applying the isometric identities $$C_\infty \otimes_h C_\infty = S_2, C_1 \otimes_h C_\infty = S_1,$$ we have $$C_u \otimes_h C_\infty = (S_2, S_1)_\eta = S_{\frac{2}{1+ \eta}}.$$ Similarly, we have isometric identity $$C_u \otimes_h C_1 = S_{\frac{2}{\eta}}.$$ Finally, we obtain that 
$$C_u \otimes_h C_v = (C_u \otimes_h C_\infty, C_u \otimes_h C_1)_\theta = (S_{\frac{2}{1+ \eta}}, S_{\frac{2}{\eta}})_\theta = S_{\frac{2}{1-1/v + 1/u}}.$$ 
The second assertion now follows easily.
\end{proof}

The following simple observation will be useful for us.

\begin{rem}\label{rem}
We have complete isometries
\begin{displaymath}
C \otimes_h C = C \om C \simeq C, \quad R \otimes_h R = R \om R \simeq R. 
\end{displaymath}
An application of Kouba's interpolation result yields complete isometry
\begin{equation}\label{iso}
C_p \otimes_h C_p \simeq C_p
\end{equation}
for all $1\leq p \leq \infty$.

More generally, for any integer $n \geq 1$ we have the following complete isometry
\begin{displaymath}
\underbrace{C_p \otimes_h C_p \otimes_h \cdots \otimes_h C_p}_{n \, \, \textit{times}} \simeq C_p.
\end{displaymath}
In particular, we have the following isometry (in the Banach space category)
\begin{displaymath}
\underbrace{C_1 \otimes_h C_1 \otimes_h \cdots \otimes_h C_1}_{n \, \, \textit{times}}\simeq \underbrace{C_\infty \otimes_h C_\infty \otimes_h \cdots \otimes_h C_\infty}_{n \, \, \textit{times}} \simeq \ell_2.
\end{displaymath}
\end{rem}

\begin{proof}[Proof of Theorem \ref{mainthm}]
Let us first assume that $1 < p \le 2$. Define $\theta = \frac{1 + 1/p}{2} \in (0, 1),$ then $ q = \theta p  = \frac{p+1}{2} \in (1, \infty)$ and $ r = \theta p'  = \frac{p+1}{2(p-1)} \in (1, \infty)$. That is $$ \frac{1}{p} = \frac{1-\theta}{\infty}  + \frac{\theta}{q},$$ $$ \frac{1}{p'} = \frac{1-\theta}{\infty}+ \frac{\theta}{r}.$$ By Proposition \ref{description_S_p_E} and Kouba's interpolation result, \begin{eqnarray*} S_p[C] &= &C_p \otimes_h C_\infty \otimes_h C_{p'} \\ &= &(C_\infty \otimes_h C_\infty \otimes_h C_\infty, C_q\otimes_h C_\infty\otimes_h C_r)_\theta \\ \text{By Remark \ref{rem}} &\stackrel{isometric}{=}& (C_1 \otimes_h C_1 \otimes_h C_1, C_q\otimes_h C_\infty\otimes_h C_r)_\theta  \\ & = & C_{\frac{2p}{p+1}} \otimes_h C_{\frac{2p}{p-1} }\otimes_h C_{\frac{2p}{3p-3}} \\ & = & C_{\frac{2p}{p+1}} \otimes_h R_{\frac{2p}{p+1} }\otimes_h R_{\frac{2p}{3-p}} \\& = & S_{\frac{2p}{p+1}} \otimes_h R_{\frac{2p}{3-p}}. \end{eqnarray*} Hence we get the desired isometric embedding  $$S_p[C] \hookrightarrow C_{\frac{2p}{3-p}} \otimes_h S_{\frac{2p}{p+1}} \otimes_h R_{\frac{2p}{3-p}} = S_{\frac{2p}{3-p}} [S_{\frac{2p}{p+1}}].$$ Similar argument shows that if $1 < p \le 2$, then isometrically, we have $$S_{p'}[R] = R_p \otimes_h R_\infty \otimes_h R_{p'} =  R_{\frac{2p}{p+1}} \otimes_h R_{\frac{2p}{p-1} }\otimes_h R_{\frac{2p}{3p-3}} = S_{\frac{2p}{p-1}} \otimes_h R_{\frac{2p}{3p-3}} \hookrightarrow S_{\frac{2p}{3p-3}} [S_{\frac{2p}{p-1}}].$$  By taking the opposite operator space, we have  $$(S_{p'} [R])^{op}  = (R_p \otimes_h R_\infty \otimes_h R_{p'})^{op} = R_{p'}^{op} \otimes_h R_\infty^{op} \otimes_h R_{p}^{op} = C_{p'} \otimes_h C_\infty \otimes_h C_p = S_{p'}[C].$$ It follows that the Banach space $S_{p'}[C]$ embeds isometrically in $S_{\frac{2p}{3p-3}} [S_{\frac{2p}{p-1}}]$ whenever $ 1 < p \le 2$. In other words, if $2 \le p < \infty$, then  $$S_p[C] \hookrightarrow S_{2p/3}[S_{2p}]. $$
\end{proof}

\begin{cor}
If Statement \ref{Sq} holds, then $S_p[C]$ is a $\text{UMD}$ space.
\end{cor}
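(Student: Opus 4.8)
The plan is to combine the isometric embedding furnished by Theorem \ref{mainthm} with the hereditary behaviour of the $\text{UMD}$ property under passage to closed subspaces. First I would fix $1 < p < \infty$ and apply Theorem \ref{mainthm} to produce exponents $1 < u, v < \infty$ together with an isometric (Banach) embedding $S_p[C] \hookrightarrow S_u[S_v]$; concretely one may take $u = \frac{2p}{3-p}$, $v = \frac{2p}{p+1}$ when $1 < p \le 2$, and $u = \frac{2p}{3}$, $v = 2p$ when $2 \le p < \infty$, as in the proof of Theorem \ref{mainthm}.

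Next I would invoke Statement \ref{Sq} with the pair $(u,v)$ playing the role of $(p,q)$: it asserts that $S_v$ is $\text{OUMD}_u$, and hence, by the characterization of Musat recalled in the introduction (an operator space $E$ is $\text{OUMD}_u$ if and only if $S_u[E]$ is $\text{UMD}$ as a Banach space), the Banach space $S_u[S_v]$ is $\text{UMD}$.

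Finally I would use the elementary fact that the $\text{UMD}_r$ property passes to closed subspaces without increasing the constant $\beta_r$: if $Y$ is a closed subspace of a Banach space $X$ and $(dx_k)$ is a $Y$-valued martingale difference sequence, then it is a fortiori an $X$-valued martingale difference sequence, so the defining inequality for $X$ applies verbatim to martingales valued in $Y$. Since Theorem \ref{mainthm} realizes $S_p[C]$ as (isometric to) a closed subspace of the $\text{UMD}$ space $S_u[S_v]$, it follows that $S_p[C]$ is $\text{UMD}$, which is the assertion.

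I do not expect any genuine obstacle in this corollary: it is a formal consequence of the two quoted results, the only point deserving a word of justification being the standard heredity of $\text{UMD}$ under subspaces. All the substantive work is carried out in Theorem \ref{mainthm} and is assumed in Statement \ref{Sq}.
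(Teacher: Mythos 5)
Your argument is correct and is exactly the intended (implicit) proof in the paper: combine the isometric embedding $S_p[C]\hookrightarrow S_u[S_v]$ from Theorem \ref{mainthm} with Statement \ref{Sq} (which would give that $S_u[S_v]$ is $\text{UMD}$) and the standard fact that $\text{UMD}$ passes to closed subspaces. The exponents you record also match those produced in the proof of Theorem \ref{mainthm}, so there is nothing to add.
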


We will use the following lemma to prove Theorem \ref{bisthm}.

\begin{lem}\label{superR}
If $1\le p_1, p_2, \cdots, p_n < \infty$ or $1< p_1, p_2, \cdots, p_n \le \infty$. Then there exist $1 < q_1, q_2, \cdots, q_n < \infty$, such that we have $$C_{p_1} \otimes_h C_{p_2} \otimes_h \cdots \otimes_h C_{p_n} \stackrel{isometric}{=} C_{q_1} \otimes_h C_{q_2} \otimes_h \cdots \otimes_h C_{q_n} .$$ Moreover, $C_{p_1} \otimes_h C_{p_2} \otimes_h \cdots \otimes_h C_{p_n}$ is a super-reflexive Banach space.
\end{lem}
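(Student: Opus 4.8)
The plan is to reduce the second set of hypotheses to the first, to realise $C_{p_1}\otimes_h\cdots\otimes_h C_{p_n}$ as a complex interpolation space one of whose endpoints is a Hilbert space, and then to push the exponents equal to $1$ into $(1,\infty)$ by the same endpoint-swap that was used in the proof of Theorem \ref{mainthm}. \emph{Step 1 (reduction to $1\le p_i<\infty$).} If instead $1<p_1,\dots,p_n\le\infty$, pass to the opposite operator space: using $(A\otimes_h B)^{op}=B^{op}\otimes_h A^{op}$ and $C_a^{op}=R_a=C_{a'}$ (as in the proof of Theorem \ref{mainthm}), one obtains $(C_{p_1}\otimes_h\cdots\otimes_h C_{p_n})^{op}=C_{p_n'}\otimes_h\cdots\otimes_h C_{p_1'}$, and now $1\le p_i'<\infty$ for all $i$. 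A Banach space and its operator-space opposite are the same Banach space, and both conclusions of the Lemma (the isometric identity and super-reflexivity) are Banach-space statements, so it suffices to treat the case $1\le p_i<\infty$, which I assume from now on.

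\emph{Step 2 (an interpolation identity and super-reflexivity).} Put $A=\{\,i:p_i=1\,\}$ and fix $\theta\in(0,1)$ with $\theta>1/p_i'$ for every $i\notin A$ (any $\theta\in(0,1)$ if $A=\{1,\dots,n\}$). For $i\notin A$ define $r_i\in(1,\infty)$ by $\frac1{r_i}=1-\frac1{\theta p_i'}$. From $\frac1{p_\theta}=\frac{1-\theta}{p_0}+\frac{\theta}{p_1}$ one checks at once that $C_{p_i}=(C_1,C_1)_\theta$ for $i\in A$ and $C_{p_i}=(C_1,C_{r_i})_\theta$ for $i\notin A$; hence, iterating Kouba's theorem over the $n$ Haagerup factors,
\[
C_{p_1}\otimes_h\cdots\otimes_h C_{p_n}=\bigl(\,C_1^{\otimes_h n},\ G\,\bigr)_\theta,\qquad G=G_1\otimes_h\cdots\otimes_h G_n,\quad G_i=\begin{cases}C_1&\text{if }i\in A,\\ C_{r_i}&\text{if }i\notin A.\end{cases}
\]
By Remark \ref{rem} the endpoint $C_1^{\otimes_h n}$ is isometric to $\ell_2$, hence uniformly convex; by a theorem of Cwikel and Reisner, complex interpolation with a uniformly convex endpoint produces a uniformly convex space, so $C_{p_1}\otimes_h\cdots\otimes_h C_{p_n}$ is uniformly convex, in particular super-reflexive. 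Through Step 1 this also settles super-reflexivity in the case $1<p_i\le\infty$.

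\emph{Step 3 (making all $q_i$ lie in $(1,\infty)$).} If $A=\varnothing$, take $q_i=p_i$. Otherwise, invoke Remark \ref{rem} to replace, at the cost of a Banach isometry, the endpoint $C_1^{\otimes_h n}$ in the displayed identity by $C_\infty^{\otimes_h n}$ --- this is precisely the ``by Remark'' step in the proof of Theorem \ref{mainthm}. Applying Kouba's theorem to $\bigl(C_\infty^{\otimes_h n},G\bigr)_\theta$, together with $(C_\infty,C_1)_\theta=C_{1/\theta}$ and $(C_\infty,C_{r_i})_\theta=C_{r_i/\theta}$, gives
\[
C_{p_1}\otimes_h\cdots\otimes_h C_{p_n}\ \stackrel{isometric}{=}\ C_{q_1}\otimes_h\cdots\otimes_h C_{q_n},\qquad q_i=\begin{cases}1/\theta&\text{if }i\in A,\\ r_i/\theta&\text{if }i\notin A,\end{cases}
\]
and since $\theta\in(0,1)$ and $r_i\in(1,\infty)$, every $q_i$ belongs to $(1,\infty)$. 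In the case $1<p_i\le\infty$ one composes this with the opposite-space identity of Step 1 (recall $R_a=C_{a'}$). The only point I do not expect to be routine is this endpoint-swap inside the interpolation bracket: exactly as in Theorem \ref{mainthm}, it relies on the fact recorded in Remark \ref{rem} that $C_1^{\otimes_h n}$ and $C_\infty^{\otimes_h n}$ are one and the same Banach space (both equal $\ell_2$), and on the compatibility of the resulting couple.
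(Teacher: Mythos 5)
Your argument is correct and follows essentially the same route as the paper: write the Haagerup tensor product as a complex interpolation space via Kouba's theorem with one endpoint equal to $C_1^{\otimes_h n}$ or $C_\infty^{\otimes_h n}$, swap that endpoint using Remark \ref{rem}, and read off exponents in $(1,\infty)$, with super-reflexivity coming from the Hilbert-space endpoint (the paper phrases this as the space being $\theta$-Hilbertian rather than citing Cwikel--Reisner, and it treats the case $1<p_i\le\infty$ directly and gets the other case by duality, the mirror image of your Step 1). These are only cosmetic differences.
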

\begin{proof}
Assume first that $ 1< p_1, p_2, \cdots, p_n \le \infty$, then by choosing $\theta \in (0, 1)$ such that $\theta > \max( 1/p_1, 1/p_2, \cdots, 1/p_n)$, we can define $\widetilde{p}_1, \widetilde{p}_2, \cdots, \widetilde{p}_n \in (1, \infty]$ by $\widetilde{p}_1 = \theta p_1, \widetilde{p}_2 =\theta p_2, \cdots, \widetilde{p}_n = \theta p_n$ such that for $k = 1, 2, \cdots, n$, $$\frac{1}{p_k} = \frac{1-\theta}{\infty} + \frac{\theta}{\widetilde{p}_k}.$$ It follows that \begin{eqnarray*} & & C_{p_1} \otimes_h C_{p_2} \otimes_h \cdots \otimes_h C_{p_n} \\ &=& (C_\infty \otimes_h C_\infty \otimes_h \cdots \otimes_h C_\infty, C_{\widetilde{p}_1} \otimes_h C_{\widetilde{p}_2} \otimes_h \cdots \otimes_h C_{\widetilde{p}_n})_\theta \\ & \stackrel{isometric}{=}&(C_1 \otimes_h C_1 \otimes_h \cdots \otimes_h C_1, C_{\widetilde{p}_1} \otimes_h C_{\widetilde{p}_2} \otimes_h \cdots \otimes_h C_{\widetilde{p}_n})_\theta \\ & = & C_{q_1} \otimes_h C_{q_2} \otimes_h \cdots \otimes_h C_{q_n},\end{eqnarray*} where $\frac{1}{q_k} = \frac{1-\theta}{1} + \frac{\theta}{\widetilde{p}_k} \in (0, 1)$, and hence $1< q_k< \infty$ for all $1\le k \le n$. 

Super-reflexivity of $C_{p_1} \otimes_h C_{p_2} \otimes_h \cdots \otimes_h C_{p_n}$ follows from the above first identity, which shows that it is a $(1-\theta)$-Hilbertian space. 

The case when $1< p_1, p_2, \cdots, p_n \le \infty$ can be treated similarly or can be obtained by duality.
\end{proof}

\begin{proof}[Proof of Theorem \ref{bisthm}]
Assume that we have a complete isomorphic embedding $$j: C \rightarrow S_{p_1}[S_{p_2}[\cdots [S_{p_n}]\cdots ]].$$ By the injectivity of the Haagerup tensor product, we have a complete isomorphic embedding
$$j \otimes Id_R: C \otimes_h R \rightarrow S_{p_1}[S_{p_2}[\cdots [S_{p_n}]\cdots ]] \otimes_h R.$$ Since $1< p_1, p_2, \cdots, p_n < \infty$ and $R = C_1$, hence by Lemma \ref{superR}, $S_{p_1}[S_{p_2}[\cdots [S_{p_n}]\cdots ]] \otimes_h R$ is a super-reflexive Banach space. This implies that $S_\infty = C \otimes_h R$ is also super-reflexive, which is a contradiction.

For any closed subspace $F \subset S_{p_1}[S_{p_2}[\cdots [S_{p_n}]\cdots ]]$, we have 
\begin{displaymath}
\frac{S_{p_1}[S_{p_2}[\cdots [S_{p_n}]\cdots ]]}{F} \otimes_h R \simeq \frac{S_{p_1}[S_{p_2}[\cdots [S_{p_n}]\cdots ]] \otimes_h R}{F\otimes_h R}.
\end{displaymath}
Indeed, by \cite{Pisier2}, for any $1\leq p \leq \infty$, if $E_2\subset E_1$ is a closed subspace, then we have complete isometry
\begin{displaymath}
S_p[E_1/E_2] = S_p[E_1]/S_p[E_2].
\end{displaymath}
Using the above fact, it is easy to see that 
\begin{displaymath}
(E_1/E_2 )\otimes_h R_p = \frac{E_1\otimes_h R_p}{E_2 \otimes_h R_p}.
\end{displaymath}
Note that the super-reflexive property is stable under taking the quotient, hence $\frac{S_{p_1}[S_{p_2}[\cdots [S_{p_n}]\cdots ]]}{F} \otimes_h R$ is super-reflexive. Hence by using the same idea as above, assume that there is a completely isomorphic embedding: $$i: C \rightarrow S_{p_1}[S_{p_2}[\cdots [S_{p_n}]\cdots ]]/F,$$ then we have completely isomorphic embedding: $$ i \otimes Id_R: C \otimes_h R \rightarrow \frac{S_{p_1}[S_{p_2}[\cdots [S_{p_n}]\cdots ]]}{F} \otimes_h R,$$ which leads to a contradiction. Hence $C$ can not be embedded completely isomorphically into $S_{p_1}[S_{p_2}[\cdots [S_{p_n}]\cdots ]]/F$.
\end{proof}

\begin{prob} Let $n \ge 3$. Consider the underlying Banach space structure of the operator space $C_{p_1} \otimes_h C_{p_2} \otimes_h \cdots \otimes_h C_{p_n}$. Is it always a $\text{UMD}$ space whenever $1< p_1, p_2, \cdots, p_n< \infty$?
\end{prob}

Let us state the following result of the author from \cite{Q2}.

Fix $1 \le p, q \le \infty$. We define by induction: $$E_0 = \C \text{ \,and\, } E_{n+1}=\ell_p(\ell_q(E_n)).$$ 

\begin{thm}
Let $1\le p \neq q \le \infty$. Then there exists $c=c(p,q) > 1$ depending only on $p$ and $q$, such that the $\text{UMD}_2$ constants of the above defined spaces $E_n$ satisfy $$\beta_2(E_n) \ge c^n.$$ 
\end{thm}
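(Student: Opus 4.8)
The plan is to build the spaces $E_n=\ell_p(\ell_q(E_{n-1}))$ inside a concrete scale of iterated vector-valued $L_r$-spaces and then exploit the known "bootstrapping" principle that the UMD$_2$ constant of an iterated $\ell_p(\ell_q(\cdots))$ grows, quantitatively, because each nested pair $(\ell_p,\ell_q)$ with $p\neq q$ contributes a multiplicative factor bounded away from $1$. Concretely, I would first record the tensorial/structural description $E_n\simeq \ell_p^{\,\otimes}\otimes \ell_q^{\,\otimes}\otimes\cdots$ (the $2n$-fold alternating $\ell_p,\ell_q$ tower over $\C$), and isolate the finite-dimensional truncations $E_n^{(N)}$ obtained by replacing each $\ell_p,\ell_q$ with $\ell_p^N,\ell_q^N$. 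Since UMD$_2$ constants are determined by finite-dimensional subspaces and are monotone under isometric embeddings, it suffices to bound $\beta_2$ from below on these truncations.

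Next I would set up the induction. The base case $\beta_2(E_0)=\beta_2(\C)=1$ is trivial. For the inductive step, the key quantitative lemma is a \emph{one-step amplification estimate}: there is $c=c(p,q)>1$ such that for every Banach space $X$,
\begin{equation*}
\beta_2\bigl(\ell_p(\ell_q(X))\bigr)\ \ge\ c\,\cdot\,\beta_2(X).
\end{equation*}
Granting this, $\beta_2(E_n)\ge c^n\beta_2(E_0)=c^n$ and we are done. To prove the amplification lemma I would construct, for a fixed $X$-valued martingale difference sequence $(d_k)$ that nearly achieves $\beta_2(X)$, a new martingale in $\ell_p(\ell_q(X))$ built by a "tensoring" trick: take the Haar-type/martingale structure witnessing $\beta_2(X)$ and combine it with a scalar martingale on $\ell_p(\ell_q)$ that itself has UMD$_2$ constant strictly bigger than $1$ (this uses $p\neq q$: $\ell_p(\ell_q)$ is \emph{not} isometric to a Hilbert space, indeed not even isomorphically Hilbertian with constant $1$, so $\beta_2(\ell_p(\ell_q))>1$ by the Bourgain/Pisier-type characterization that $\beta_2=1$ forces Hilbert space). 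The point is that the martingale transform on a product filtration multiplies: the transform on the outer $\ell_p(\ell_q)$-factor and on the inner $X$-factor act on independent coordinates of a product probability space, so the norms multiply and one gains the factor $c$ at each level.

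The main obstacle I anticipate is making the "norms multiply" step genuinely quantitative and lower-bounded rather than merely producing \emph{some} growth: one must choose the auxiliary scalar martingale on $\ell_p(\ell_q)$ so that its amplification factor $c$ does not degrade as it is iterated (it must depend only on $p,q$, not on the level $n$), and one must verify that tensoring an $X$-martingale with this fixed model martingale on a product space really yields a martingale for a legitimate product filtration in $\ell_p(\ell_q(X))$ whose unconditionality constant is at least the product. This is where the explicit arithmetic of the Haar system and the Fubini-type identification $L_2(\Omega_1\times\Omega_2;\ell_p(\ell_q(X)))$ has to be handled with care; it is essentially the same mechanism used in the classical proof that $\beta_p$ is attained on dyadic martingales and that it multiplies under $L_p$-tensor products of the parameter space. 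Once this one-step estimate is nailed down with a level-independent $c(p,q)>1$, the theorem follows by the immediate induction sketched above; the remaining verifications (finite-dimensional reduction, stability of $\beta_2$ under isometric embedding and under $\ell_p$/$\ell_q$ sums) are routine.
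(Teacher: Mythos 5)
You should first be aware that this paper contains no proof of the theorem: it is imported verbatim from \cite{Q2}, so there is no internal argument to compare yours against and the proposal must be judged on its own. Judged that way, it has a genuine gap at its central step, the ``one-step amplification'' $\beta_2\bigl(\ell_p(\ell_q(X))\bigr)\ge c(p,q)\,\beta_2(X)$ for \emph{every} Banach space $X$. Everything else you describe (finite-dimensional reduction, monotonicity under isometric embeddings, the induction itself) is indeed routine, but this lemma carries the whole theorem, and the mechanism you offer for it --- tensoring a near-extremal $X$-valued martingale $(d_k)$ on $\Omega_1$ with a near-extremal $\ell_p(\ell_q)$-valued martingale $(e_j)$ on $\Omega_2$ so that ``the norms multiply'' --- is not available.

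Concretely: for the transformed and untransformed norms to factor as products, the doubly indexed family $\{d_k\otimes e_j\}$ would have to be a martingale difference sequence for a single increasing filtration on $\Omega_1\times\Omega_2$, with an independent sign $\varepsilon_k\delta_j$ attached to each pair. No such filtration exists in general: for $e_1,\dots,e_J$ to be measurable by the time row $k$ is processed, the $\Omega_2$-randomness must already have been revealed while processing row $k-1$, after which the whole row $\sum_j d_k\otimes e_j$ collapses into a \emph{single} martingale difference (revealed together with $d_k$), so only one sign per row is legitimate and one recovers nothing beyond the trivial bound $\beta_2(\ell_p(\ell_q(X)))\ge\beta_2(X)$. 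There is no ``classical mechanism'' by which $\beta_2$ multiplies under such tensorings; the one computable case, $\beta_2(L_2(\mu;X))=\beta_2(X)$, shows that tensoring with a parameter space gives no gain, and if a general inequality of the form ``$\beta_2$ of a cross-normed tensor product dominates the product of the $\beta_2$'s'' were true, the present theorem would be a three-line corollary of $\beta_2(\ell_p(\ell_q))>1$ and would not have required the separate paper \cite{Q2}. A secondary issue: even that base inequality needs care, since conditionally symmetric (e.g.\ Paley--Walsh) martingales are invariant in law under sign changes of their increments and therefore witness nothing, so the appeal to ``$\beta_2(Y)=1$ forces $Y$ Hilbertian'' must be replaced by an explicit non-symmetric martingale giving a quantitative $\beta_2(\ell_p(\ell_q))\ge 1+\delta(p,q)$. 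The actual argument in \cite{Q2} replaces the tensor trick by a bespoke inductive construction of martingales adapted to the alternating $\ell_p/\ell_q$ structure; the exponential growth is the hard point, not a formal consequence of a one-step multiplicative lemma.
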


Although we can not solve the above open problem, we have the following:

\begin{prop}
Let $1< p \ne q < \infty$. Define $$X_n(p,q) = \underbrace{C_p \otimes_h C_q \otimes_h \cdots \otimes_h C_p \otimes_h C_q}_{\text{$n$ times $C_p \otimes_h C_q$}}.$$  Then we have $$\lim_{n \to \infty} \beta_2(X_n(p,q)) = \infty.$$
\end{prop}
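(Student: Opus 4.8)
The plan is to realize, as a Banach space, an isometric copy of the iterated sum $E_m$ of \cite{Q2} inside $X_n(p,q)$ with $m\to\infty$, and then to invoke the theorem quoted above. The first and main step is to rewrite $X_n(p,q)$, \emph{as a Banach space}, as a nested vector-valued Schatten class. Iterating Proposition \ref{description_S_p_E} together with $S_\alpha=C_\alpha\otimes_h R_\alpha$ and the complete isometry $R_\alpha\simeq C_{\alpha'}$, one obtains, for any $\alpha_1,\cdots,\alpha_n\in(1,\infty)$, the complete isometry
\[
S_{\alpha_1}\big[S_{\alpha_2}[\cdots[S_{\alpha_n}]\cdots]\big]=C_{\alpha_1}\otimes_h\cdots\otimes_h C_{\alpha_n}\otimes_h C_{\alpha_n'}\otimes_h\cdots\otimes_h C_{\alpha_1'},
\]
whose string of reciprocal exponents is the palindromic pattern $(\tfrac1{\alpha_1},\cdots,\tfrac1{\alpha_n},1-\tfrac1{\alpha_n},\cdots,1-\tfrac1{\alpha_1})$ of length $2n$, whereas $X_n(p,q)=C_p\otimes_h C_q\otimes_h\cdots\otimes_h C_p\otimes_h C_q$ has the string $(\tfrac1p,\tfrac1q,\cdots,\tfrac1p,\tfrac1q)$ of the same length.

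I would then invoke the interpolation device already used in the proofs of Theorem \ref{mainthm} and Lemma \ref{superR}: writing each factor as $C_r=(C_\infty,C_{\theta r})_\theta$, applying Kouba's Theorem \ref{Kouba}, and exchanging the endpoint $C_\infty\otimes_h\cdots\otimes_h C_\infty$ for $C_1\otimes_h\cdots\otimes_h C_1$ (a Banach isometry, by Remark \ref{rem}) shows that translating every reciprocal exponent by one and the same constant leaves the underlying Banach space of a Haagerup tensor product of column spaces unchanged, provided the translated reciprocals stay in $(0,1)$; since all our exponents lie in $(1,\infty)$, both directions of translation are available. Choosing the shift $c=\tfrac12\big(\tfrac1p+\tfrac1q-1\big)$ and defining $\alpha,\alpha'$ by $\tfrac1\alpha=\tfrac12\big(1+\tfrac1p-\tfrac1q\big)$ and $\tfrac1{\alpha'}=\tfrac12\big(1+\tfrac1q-\tfrac1p\big)$ — so that $\alpha,\alpha'\in(1,\infty)$ and $\tfrac1\alpha+\tfrac1{\alpha'}=1$ — a direct check shows that the translated string of $X_n(p,q)$ is exactly the palindromic pattern above with $\alpha_i=\alpha$ for odd $i$ and $\alpha_i=\alpha'$ for even $i$. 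Hence, as Banach spaces,
\[
X_n(p,q)\;\cong\;S_\alpha\big[S_{\alpha'}[S_\alpha[\cdots]\cdots]\big]\qquad(n\ \text{nested levels}),
\]
with $\alpha\neq\alpha'$ since $p\neq q$.

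Next I would pass to block diagonals: it is classical (see \cite{Pisier2}) that for $1<r<\infty$ and any operator space $Y$ the block-diagonal subspace $\{\sum_i e_{ii}\otimes y_i\}$ of $S_r[Y]$ is isometric, as a Banach space, to $\ell_r(Y)$. Applying this at each of the $n$ nested levels of the space just obtained produces an isometric copy of $\ell_\alpha\big(\ell_{\alpha'}(\ell_\alpha(\cdots))\big)$ ($n$ alternating levels) inside $X_n(p,q)$; restricting to one coordinate of the innermost factor when $n$ is odd, $X_n(p,q)$ therefore contains an isometric copy of the space $E_{\lfloor n/2\rfloor}$ of \cite{Q2} built from the pair $(\alpha,\alpha')$. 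Since the $\text{UMD}_2$ constant does not increase under passing to a subspace, we get $\beta_2\big(X_n(p,q)\big)\ge\beta_2\big(E_{\lfloor n/2\rfloor}\big)\ge c^{\lfloor n/2\rfloor}$ for some $c=c(\alpha,\alpha')>1$, by the theorem of \cite{Q2} (which applies since $1<\alpha\neq\alpha'<\infty$); letting $n\to\infty$ finishes the proof.

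The step I expect to demand the most care is the first one: one has to verify that operator-space structure is lost \emph{only} at the harmless endpoint exchange $C_\infty\otimes_h\cdots\otimes_h C_\infty\leftrightarrow C_1\otimes_h\cdots\otimes_h C_1$ (where a Banach isometry is all that is needed), that the specific shift $c=\tfrac12(\tfrac1p+\tfrac1q-1)$ is admissible for the given $p$ and $q$, and that it does convert the alternating string of $X_n(p,q)$ into the palindromic string of a nested Schatten class. Once that identification is in place, the remaining steps are routine.
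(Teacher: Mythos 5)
Your proposal is correct, and it reaches the conjugate-exponent situation by a genuinely different route than the paper. The paper first treats the case $q=p'$ directly, observing via $C_q=R_p$ that $X_{2n}(p,p')=S_p[S_{p'}[X_{2(n-1)}]]$ and hence that $E_n$ of \cite{Q2} sits inside $X_{2n}$; for general $(p,q)$ it then chooses a conjugate pair $(r,r')$ and an auxiliary $s$ with $X_n(r,r')=(X_n(s,s),X_n(p,q))_\theta$ by Kouba, and uses the interpolation inequality $\beta_2(X_n(r,r'))\le\beta_2(X_n(s,s))^{1-\theta}\beta_2(X_n(p,q))^{\theta}$ together with $X_n(s,s)\simeq C_s$ (so $\beta_2=1$) to force $\beta_2(X_n(p,q))\to\infty$. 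You instead prove a stronger structural statement: a uniform additive shift of all reciprocal exponents (the same Kouba-plus-endpoint-exchange device as in Lemma \ref{superR} and Theorem \ref{mainthm}) identifies $X_n(p,q)$ isometrically, as a Banach space, with the nested class $S_\alpha[S_{\alpha'}[\cdots]]$ for the conjugate pair $\tfrac1\alpha=\tfrac12(1+\tfrac1p-\tfrac1q)$, and then you embed $E_{\lfloor n/2\rfloor}$ via iterated block diagonals. Your computations check out: the shift $c=\tfrac12(\tfrac1p+\tfrac1q-1)$ is admissible since $\tfrac1\alpha,\tfrac1{\alpha'}\in(0,1)$, the shifted alternating string does coincide with the palindromic string of the nested Schatten class for both parities of $n$, $\alpha\ne\alpha'$ exactly when $p\ne q$, and the diagonal of $S_r[Y]$ is indeed isometrically $\ell_r(Y)$ (a fact the paper also uses, implicitly, when it asserts $E_n\subset X_{2n}$). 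What your route buys is the isometric identification of every $X_n(p,q)$ with a nested vector-valued Schatten class — of independent interest, and yielding the explicit bound $\beta_2(X_n(p,q))\ge c(\alpha,\alpha')^{\lfloor n/2\rfloor}$ directly — at the cost of a slightly more delicate bookkeeping of exponents; the paper's interpolation-of-constants step is shorter but works only at the level of $\text{UMD}$ constants rather than of the spaces themselves.
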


\begin{proof}
To simplify, if there is no risk of confusion, we will use the notation $X_n$ instead of $X_n(p,q)$ in the proof. Let us first assume that $1< p\ne q < \infty$ and $\frac{1}{p} + \frac{1}{q} = 1$ (i.e. $p = q'$ and $p \ne 2$). Then $C_p = R_q$ and $C_q = R_p$. Hence we have \begin{eqnarray*} X_{2n} &= & C_p \otimes_h C_q \otimes_h\underbrace{C_p \otimes_h C_q \otimes_h \cdots \otimes_h C_p \otimes_h C_q}_{\text{$2(n-1)$ times $C_p \otimes_h C_q$}} \otimes_h C_p \otimes_h C_q \\ & = & C_p \otimes_h C_q \otimes_h X_{2(n-1)} \otimes_h R_q \otimes_h R_p \\ & = & S_p[S_q[X_{2(n-1)}]].\end{eqnarray*} It follows that $$X_{2n} = \underbrace{S_p[S_q[\cdots [S_p[S_q]]\cdots]]}_{\text{$n$ times $S_p[S_q]$}}.$$ In particular, $E_n\subset X_{2n}$ (completely) isometrically. Hence $$\lim_{n \to \infty} \beta_2(X_n) \ge \lim_{n \to \infty} \beta_2(E_n) = \infty.$$

Then we treat the general case. It is easy to see (for example, one can draw the picture of the points $(\frac{1}{p}, \frac{1}{q}), (\frac{1}{r}, \frac{1}{r'}), (\frac{1}{s}, \frac{1}{s})$ in the unit square $(0,1) \times (0,1)$ and use the obvious geometric meaning of the following equation system) that for every pair $(p,q)$ with $1< p \ne q < \infty$, there exist $0 < \theta < 1$ and $1< r, s < \infty$, $r \ne 2$ such that $$ \frac{1}{r} = \frac{1-\theta}{s} + \frac{\theta}{p}$$ $$ \frac{1}{r'} = \frac{1-\theta}{s} + \frac{\theta}{q}.$$ By Kouba's interpolation theorem, we have $$X_n(r,r') = (X_n(s,s), X_n(p,q))_\theta.$$ Hence by the interpolation property of $\text{UMD}$ constant, we have $$ \beta_2(X_n(r,r')) \le \beta_2(X_n(s,s))^{1-\theta} \beta_2(X_n(p,q))^\theta.$$ By definition, we have $X_n(s,s) \simeq C_s,$  and hence $\beta_2(X_n(s,s)) = 1$. Combining with the first step, we have $$\lim_{n\to\infty} \beta_2(X_n(p,q)) \ge \lim_{n \to \infty} \beta_2(X_n(r,r'))^{1/\theta} = \infty.$$
\end{proof}

\section{Further results}
In this section, we give some equivalent conditions for $S_p[E]$ to be $\text{UMD}$, or equivalently, for $E$ to be $\text{OUMD}_p$. We give the equivalence between the $\text{UMD}$ property and the boundedness of the triangular projection on $S_p[E]$. Applying this equivalence, we prove that $E$ is $\text{OUMD}_p$ if and only if $E$ is $\text{OUMD}_p$ with respect to the so-called canonical filtration of matrix algebras.

We first give the following simple observation.
\begin{prop}
Let $1<p<\infty$, if we denote by $\mathcal{R}$ the Riesz projection $\mathcal{R}: L_p(\T, m) \rightarrow L_p(\T, m)$ defined by
\begin{displaymath}
\sum_{\text{finite}} x_n z^n \mapsto\sum_{n\geq 0} x_n z^n.
\end{displaymath}
Then $S_p[E]$ is $\text{UMD}$ if and only if
\begin{displaymath}
\mathcal{R}_E := Id_E \otimes \mathcal{R}: L_p(\T, m;E) \rightarrow L_p(\T, m;E)
\end{displaymath}
is completely bounded.
\end{prop}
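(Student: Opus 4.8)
The plan is to identify the triangular (Riesz-type) projection on the circle with a martingale transform, so that one direction follows from the very definition of UMD and the other from the known equivalence between UMD and boundedness of the Hilbert transform on $L_p(\T,m;X)$ for a Banach space $X$, applied to $X = S_p[E]$. First I would recall that $\mathcal{R} = \frac{1}{2}(Id + H) + (\text{rank-one correction for the }0\text{-th coefficient})$, where $H$ is the Hilbert transform on $L_p(\T,m)$; thus $\mathcal{R}_E = Id_E \otimes \mathcal{R}$ is bounded on $L_p(\T,m;S_p[E])$ if and only if $Id_{S_p[E]} \otimes H$ is bounded on $L_p(\T,m;S_p[E])$. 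By the Burkholder--McConnell and Bourgain theorems quoted in the Introduction, the latter holds if and only if the Banach space $S_p[E]$ is UMD. This already gives: $S_p[E]$ is UMD $\iff$ $\mathcal{R}_{S_p[E]} := Id_{S_p[E]}\otimes\mathcal{R}$ is bounded on $L_p(\T,m;S_p[E])$.

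The remaining point is to upgrade ``$\mathcal{R}_{S_p[E]}$ bounded'' to ``$\mathcal{R}_E$ completely bounded'', i.e.\ to pass from a bounded map on $S_p[E]$-valued functions to a completely bounded map on $E$-valued functions. The key observation is the identity
\begin{displaymath}
L_p(\T,m; S_p[E]) = S_p[L_p(\T,m;E)]
\end{displaymath}
(Fubini-type theorem for Pisier's vector-valued noncommutative $L_p$, valid since the scalar $L_p(\T,m)$ is a commutative, hence OUMD/QWEP-friendly, $L_p$-space), and more generally
\begin{displaymath}
S_p[L_p(\T,m;E)] = S_p\big[S_p[\cdots]\big]\text{-type iteration, so that } M_n\big(L_p(\T,m;E)\big) \text{ embeds isometrically into } L_p(\T,m;S_p[E]).
\end{displaymath}
Concretely, for each $n$ the map $Id_{M_n}\otimes \mathcal{R}_E$ on $M_n\big(L_p(\T,m;E)\big)$ is, under the above identification, nothing but the restriction of $\mathcal{R}_{S_p[E]}$ (indeed of $Id_{S_p[E]}\otimes\mathcal{R}$ with $S_p[E]$ replaced by $S_p^n[E] \subset S_p[E]$) to a subspace of $L_p(\T,m;S_p[E])$. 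Hence $\|Id_{M_n}\otimes\mathcal{R}_E\| \le \|\mathcal{R}_{S_p[E]}\|$ uniformly in $n$, which is precisely $\|\mathcal{R}_E\|_{cb} \le \|\mathcal{R}_{S_p[E]}\|$. The converse inequality $\|\mathcal{R}_{S_p[E]}\| \le \|\mathcal{R}_E\|_{cb}$ is immediate, since taking $n\to\infty$ in $\|Id_{S_p^n[E]}\otimes\mathcal{R}\| = \|Id_{M_n}\otimes\mathcal{R}_E\| \le \|\mathcal{R}_E\|_{cb}$ recovers the bound on all of $S_p[E]$ by density of $\bigcup_n S_p^n[E]$ in $S_p[E]$.

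I expect the main obstacle to be the careful bookkeeping in the Fubini identification $L_p(\T,m;S_p[E]) = S_p[L_p(\T,m;E)]$ and, crucially, checking that under this identification the operator $\mathcal{R}$ acting ``on the circle variable'' corresponds on the left-hand side to $Id_{S_p[E]}\otimes\mathcal{R}$ and on the right-hand side to $S_p[\mathcal{R}_E]$ — i.e.\ that the two tensor slots genuinely commute at the level of completely bounded maps. This uses that $\mathcal{R}$ is a bounded (Fourier) multiplier on the commutative $L_p(\T,m)$, so that $S_p[\mathcal{R}_E]$ makes sense and agrees with $Id_{S_p}\otimes\mathcal{R}_E$; once this compatibility is in place, the chain of (in)equalities above closes. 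A secondary, purely cosmetic point is disposing of the rank-one correction between $\mathcal{R}$ and $\frac12(Id+H)$, which changes norms by at most an additive constant and is completely bounded trivially, hence is harmless for both the boundedness and the complete-boundedness statements.
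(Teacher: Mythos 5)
Your overall route coincides with the paper's: reduce the UMD property of $S_p[E]$ to boundedness of the Riesz projection on $L_p(\T,m;S_p[E])$ via the classical Burkholder--McConnell/Bourgain equivalence, transport it through the Fubini identification $L_p(\T,m;S_p[E])\simeq S_p[L_p(\T,m;E)]$ so that $\mathcal{R}_{S_p[E]}$ becomes $Id_{S_p}\otimes\mathcal{R}_E$, and then identify $\|Id_{S_p}\otimes\mathcal{R}_E\|$ with $\|\mathcal{R}_E\|_{cb}$. The first two steps are fine and are exactly what the paper does; the rank-one correction between $\mathcal{R}$ and $\frac12(Id+H)$ is indeed harmless, as you say.

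The gap is in your justification of the last step. You assert that $M_n\bigl(L_p(\T,m;E)\bigr)$ embeds isometrically into $L_p(\T,m;S_p[E])=S_p[L_p(\T,m;E)]$, and deduce $\|Id_{M_n}\otimes\mathcal{R}_E\|\le\|\mathcal{R}_{S_p[E]}\|$, ``which is precisely $\|\mathcal{R}_E\|_{cb}\le\|\mathcal{R}_{S_p[E]}\|$''. This conflates two different norms on $n\times n$ matrices with entries in $X=L_p(\T,m;E)$: the operator-space matrix norm $M_n(X)$ (the $S_\infty^n\otimes_{min}X$ norm, which is what enters the definition of $\|\cdot\|_{cb}$) and the norm of the $n\times n$ corner $S_p^n[X]$ of $S_p[X]$, which for $p<\infty$ is a genuinely different, interpolated norm. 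The restriction-to-a-corner argument therefore only yields $\sup_n\|Id_{S_p^n}\otimes\mathcal{R}_E\|\le\|Id_{S_p}\otimes\mathcal{R}_E\|$, which is not the cb norm; the same conflation undermines your converse inequality. The identity $\|u\|_{cb}=\|Id_{S_p}\otimes u\|_{S_p[E]\to S_p[F]}$ is a genuine theorem of Pisier from \cite{Pisier2} (its proof recovers the $M_n(X)$ norm from the $S_p^n[X]$ norms via left and right multiplications by elements of the unit ball of $S_{2p}^n$, together with an interpolation argument for the easy direction); it is not a formal consequence of the Fubini identification. The paper simply invokes this result, and your proof becomes correct once you replace the $M_n$-embedding argument by a citation of, or a proof of, that lemma.
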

\begin{proof}
By the classical results on $\text{UMD}$ property, $S_p[E]$ is $\text{UMD}$ if and only if the corresponding Riesz projection 
\begin{displaymath}
\mathcal{R}_{S_p[E]} : L_p(\T, m;S_p[E]) \rightarrow L_p(\T, m;S_p[E])
\end{displaymath}
is bounded. By the noncommutative Fubini theorem, the natural identification gives complete isometry
\begin{displaymath}
L_p(\T, m; S_p[E]) \simeq S_p[L_p(\T,m;E)].
\end{displaymath}
In this identification, $\mathcal{R}_{S_p[E]}$ corresponds to
\begin{displaymath}
Id_{S_p} \otimes \mathcal{R}_E: S_p[L_p(\T,m;E)] \rightarrow S_p[L_p(\T,m;E)].
\end{displaymath}
A very useful result in \cite{Pisier2} tell us that $\|\mathcal{R}_E\ncb = \|Id_{S_p} \otimes \mathcal{R}_E \|$. Hence we have
\begin{displaymath}
\|\mathcal{R}_E\ncb = \| \mathcal{R}_{S_p[E]} \|.
\end{displaymath}
This ends our proof. 
\end{proof}

The next theorem can be viewed as a special case of a result in \cite{Ricard_transfer}.
\begin{thm}
Let $T_E$ be the triangular projection on $S_p[E]$ defined by
\begin{displaymath}
(x_{ij}) \mapsto (x_{ij}1_{j\geq i}).
\end{displaymath}
Then $\|T_E\ncb = \|T_E\| = \|\mathcal{R}_E\ncb$.
\end{thm}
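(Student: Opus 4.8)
The plan is to prove the equality $\|T_E\|_{cb} = \|T_E\| = \|\mathcal{R}_E\|_{cb}$ by showing two chains of inequalities, transferring back and forth between the triangular projection on $S_p[E]$ and the Riesz projection on $L_p(\T, m; E)$. The trivial inequality $\|T_E\| \le \|T_E\|_{cb}$ is automatic, so the real content is $\|T_E\|_{cb} \le \|\mathcal{R}_E\|_{cb}$ on the one hand and $\|\mathcal{R}_E\|_{cb} \le \|T_E\|$ on the other.

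First I would obtain $\|T_E\|_{cb} \le \|\mathcal{R}_E\|_{cb}$. Here one expresses the triangular truncation as an average of Riesz-type (Fourier multiplier) operators. Concretely, for a finite matrix $(x_{ij})$, the truncation to the upper triangle can be recovered by integrating the modulation $(x_{ij}) \mapsto (z^{j-i} x_{ij})$ against the Riesz projection kernel in the variable $z \in \T$; this is the classical device (going back to the relation between the triangular projection on $S_p$ and the Hilbert transform) of conjugating by the diagonal unitaries $D_z = \mathrm{diag}(1, z, z^2, \dots)$ so that $D_z^{-1} (x_{ij}) D_z = (z^{j-i} x_{ij})$, and then applying $\mathcal{R}$ coordinatewise. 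Since conjugation by a unitary is a complete isometry on $S_p[E]$ and $\mathcal{R}_E$ acts as $Id_{S_p} \otimes \mathcal{R}_E$ under the Fubini identification $L_p(\T, m; S_p[E]) \simeq S_p[L_p(\T, m; E)]$ (exactly as in the previous proposition), averaging over $z$ gives $\|T_E\|_{cb} \le \|Id_{S_p} \otimes \mathcal{R}_E\| = \|\mathcal{R}_E\|_{cb}$, the last equality again being the result of Pisier cited in the preceding proof.

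Next I would prove $\|\mathcal{R}_E\|_{cb} \le \|T_E\|$, which is the transference in the other direction and is the step I expect to be the main obstacle. The idea is the standard Coifman–Weiss style transference: approximate the Riesz projection on $L_p(\T; E)$ by compressing large triangular truncations. One replaces functions on $\T$ by their Fourier coefficients supported in a window $\{-N, \dots, N\}$, places these along an off-diagonal of a large matrix so that the Fourier cut-off $\sum_{n \ge 0}$ becomes precisely the restriction to an upper-triangular block, applies $T_E$ to that block, and then averages the resulting estimate over translates to undo the localization, letting $N \to \infty$. Because $S_p[E]$ satisfies the Fubini identity and because the triangular projection commutes with the relevant shifts, the norm $\|T_E\|$ controls the compressed operators uniformly, and in the limit one gets a bound on $Id_{S_p} \otimes \mathcal{R}_E$ acting on $S_p[L_p(\T, m; E)]$, i.e. on $\|\mathcal{R}_E\|_{cb}$. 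Assembling the two inequalities with the trivial one yields
\begin{displaymath}
\|\mathcal{R}_E\|_{cb} \le \|T_E\| \le \|T_E\|_{cb} \le \|\mathcal{R}_E\|_{cb},
\end{displaymath}
so all three quantities coincide. The delicate point throughout is keeping every estimate completely bounded — i.e. valid after tensoring with $Id_{S_p}$ — which is exactly what the Fubini identification $L_p(\T, m; S_p[E]) \simeq S_p[L_p(\T, m; E)]$ and the identity $\|\mathcal{R}_E\|_{cb} = \|Id_{S_p} \otimes \mathcal{R}_E\|$ are there to provide, so that the classical scalar transference arguments upgrade automatically to the operator space level; since, as noted, this is essentially a special case of the transference results in \cite{Ricard_transfer}, I would cite that reference for the technical core and only indicate the matrix-placement construction explicitly.
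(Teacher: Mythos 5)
The paper gives no proof of this theorem at all---it is stated only with the remark that it is a special case of the transference results of Neuwirth and Ricard \cite{Ricard_transfer}---so your sketch, which sets up the two standard transference inequalities (conjugation by the diagonal unitaries $D_z$ plus averaging for $\|T_E\|_{cb}\le\|\mathcal{R}_E\|_{cb}$, and Toeplitz-matrix compression for the converse) and defers the technical core to that same reference, is consistent with the paper and correct in its architecture. The one point worth flagging is that the inequality $\|\mathcal{R}_E\|_{cb}\le\|T_E\|$ with the \emph{plain} norm on the right---equivalently, the fact that the Toeplitz Schur multiplier $1_{j\ge i}$ on $S_p[E]$ is automatically completely bounded with the same norm---is precisely the main theorem of \cite{Ricard_transfer} and is not delivered by the compression-and-averaging device alone, so your citation is genuinely carrying the weight of the proof at exactly that step.
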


We refer to \cite{Junge_Xu_const} and \cite{Non_BR} for details on the canonical matrix filtration. As usual, we regard $M_n$ as a non-unital subalgebra of $M_\infty = B(\ell_2)$ by viewing an $n \times n$ matrix as an infinite one whose left upper corner of size $n \times n$ is the given $n\times n$ matrix, and all other entries are zero. The unit of $M_n$ is the projection $e_n \in M_\infty$ which projects a sequence in $\ell_2$ into its first $n$ coordinates. The canonical matrix filtration is the increasing filtration $(M_n)_{n\geq 1}$ of subalgebras of $M_\infty$. We denote by $E_n: M_\infty \rightarrow M_n$ the corresponding conditional expectation. It is clear that 
\begin{displaymath}
E_n(a) = e_n a e_n = \sum_{\max(i, j) \leq n} a_{ij}\otimes e_{ij}, \text{ for all } a = (a_{ij})\in M_\infty.
\end{displaymath}
\begin{rem}
Note that $E_n$ is not faithful, thus the noncommutative martingales with respect to the filtration $(M_n)_{n \geq 1}$ are different from the usual ones.  But this difference is not essential for what follows.
\end{rem}

We can define the $\text{OUMD}_p$ property with respect to this canonical matrix filtration. Let $ x \in S_p[E]$. Then \begin{displaymath}
d_1 x = E_1(x), \quad d_n x = E_n(x) - E_{n-1}(x), \text{ for all } n \geq 2.
\end{displaymath}
$E$ is said to be $\text{OUMD}_p$ with respect to the canonical matrix filtration, if there exists a constant $K$ depending only on $p$ and $E$, such that for all positive integers $N$ and all choices of signs $\varepsilon_n = \pm 1$, we have
\begin{displaymath}
\| \sum_{n=1}^N \varepsilon_n d_n x\|_{S_p[E]} \leq K \|x\|_{S_p[E]}. 
\end{displaymath}
Let $K_p(E)$ denote the best such constant.

Every choice of signs $\varepsilon$ generates a transformation $T_\varepsilon$ defined by
\begin{displaymath}
T_\varepsilon (x) = \sum_n \varepsilon_n d_n x.
\end{displaymath}
An element $x \in S_p[E]$ is said to have finite support if the support of $x$ defined by $\text{supp}(x) = \{ (i,j) \in \N^2: x_{ij} \neq 0 \}$ is finite. Note that  $T_\varepsilon$ is always well-defined on the subspace of finite supported elements.

An operator space $E$ is $\text{OUMD}_p$ with respect to the canonical matrix filtration if for every choice of signs $\varepsilon$, we have
\begin{displaymath}
\| T_\varepsilon (x) \|_{S_p[E]} \leq K_p(E) \|x\|_{S_p[E]}, \quad |\text{supp}(x)| < \infty.
\end{displaymath}

\begin{rem}
The transformation $T_\varepsilon$ is a Schur multiplication associated with the function $ f_\varepsilon (i, j) = \varepsilon_{\max(i,j)}$. Indeed, pick up an arbitrary element $x = (x_{ij}) \in S_p^N[E]$, we have 
\begin{displaymath}
d_n x = \sum_{\max(i, j) \leq n} x_{ij} \otimes e_{ij} - \sum_{\max(i, j) \leq n -1} x_{ij} \otimes e_{ij} = \sum_{\max(i, j) = n} x_{ij} \otimes e_{ij},
\end{displaymath}
thus 
\begin{displaymath}
T_\varepsilon (x) = \sum_{n = 1}^N \varepsilon_n d_n x = \sum_{n = 1}^N \varepsilon_n \sum_{\max(i, j) = n} x_{ij} \otimes e_{ij} = (\varepsilon_{\max(i,j)} x_{ij}). 
\end{displaymath}
\end{rem}. 

\begin{rem}\label{integral}
Let $D_\varepsilon = \text{diag}\{ \varepsilon_1, \cdots, \varepsilon_n, \cdots \}$. Then $T_\varepsilon(x)$ multiplied on the left by the scalar matrix $D_\varepsilon$, we get $D_\varepsilon T_\varepsilon(x) = (\varepsilon_i \varepsilon_{\max(i,j)} x_{ij})$. After taking the average according to independent uniformly distributed choices of signs, we get the lower triangular projection of $x$, i.e, we have  
\begin{displaymath}
\int D_\varepsilon T_\varepsilon (x) d \varepsilon = \int (\varepsilon_i \varepsilon_{\max(i,j)} x_{ij}) d \varepsilon = (x_{ij} 1_{i \geq j} ).
\end{displaymath}
\end{rem}

The following result is inspired by \cite{Junge_Xu_const} and \cite{Non_BR}
\begin{thm}
Let $1 < p < \infty$. Then $E$ is $\text{OUMD}_p$ if and only if it is $\text{OUMD}_p$ with respect to the canonical matrix filtration. Moreover, we have: 
\begin{displaymath}
\frac{1}{2} (K_p(E) - 1) \leq \|T_E\| \leq K_p(E).
\end{displaymath}
\end{thm}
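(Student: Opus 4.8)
The plan is to prove the two inequalities relating $K_p(E)$, the $\text{OUMD}_p$ constant with respect to the canonical matrix filtration, and $\|T_E\|$, the norm of the triangular projection on $S_p[E]$.

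For the upper bound $\|T_E\| \leq K_p(E)$: I would combine Remark \ref{integral} with the averaging over signs. By Remark \ref{integral}, the lower triangular projection of $x$ is obtained as $\int D_\varepsilon T_\varepsilon(x)\, d\varepsilon$, where $T_\varepsilon$ is the martingale transform for the canonical filtration and $D_\varepsilon$ is left multiplication by the diagonal sign matrix. Since left multiplication by the unitary scalar matrix $D_\varepsilon$ is an isometry on $S_p[E]$, and since $\|T_\varepsilon(x)\|_{S_p[E]} \leq K_p(E)\|x\|_{S_p[E]}$ for finitely supported $x$ by definition of $K_p(E)$, the triangle inequality for the Bochner-type integral gives $\|(x_{ij}1_{i\geq j})\|_{S_p[E]} \leq \int \|D_\varepsilon T_\varepsilon(x)\|\, d\varepsilon \leq K_p(E)\|x\|_{S_p[E]}$. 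One then observes that the triangular projection is the same up to a transpose/unitary conjugation, so $\|T_E\| \leq K_p(E)$, with the finitely supported elements being dense.

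For the lower bound $\frac{1}{2}(K_p(E)-1) \leq \|T_E\|$: the idea runs in the reverse direction. Given an arbitrary choice of signs $\varepsilon$ and a finitely supported $x$, I want to recover $T_\varepsilon(x) = (\varepsilon_{\max(i,j)}x_{ij})$ from triangular projections. Write $x = L + U'$ where $L = (x_{ij}1_{i\geq j})$ is the lower triangular part (including the diagonal) and $U' = (x_{ij}1_{i<j})$ the strict upper part; both are obtained from $x$ via $T_E$-type operators (the strict upper projection has norm at most $\|T_E\|+1$ at worst, or one uses that $I - T_E$ composed with removal of the diagonal also has controlled norm — here a small constant accounts for the diagonal, explaining the $\frac12$ and the $-1$). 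On $L$ one has $\max(i,j)=i$ so $T_\varepsilon$ acts as left multiplication by $D_\varepsilon$; on $U'$ one has $\max(i,j)=j$ so $T_\varepsilon$ acts as right multiplication by $D_\varepsilon$. Hence $T_\varepsilon(x) = D_\varepsilon L + U' D_\varepsilon$, and since left/right multiplication by $D_\varepsilon$ are isometries on $S_p[E]$ and each triangular piece costs at most roughly $\|T_E\| + \tfrac12$ (splitting the diagonal symmetrically between the two halves to get the clean constant), we obtain $\|T_\varepsilon(x)\|_{S_p[E]} \leq (2\|T_E\|+1)\|x\|_{S_p[E]}$, which rearranges to $K_p(E) \leq 2\|T_E\|+1$, i.e. the claimed inequality.

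The main obstacle I anticipate is the careful bookkeeping of the diagonal. The clean martingale transform $T_\varepsilon$ corresponds to the Schur multiplier $\varepsilon_{\max(i,j)}$, and $\max(i,j) = i = j$ on the diagonal, so the diagonal is handled consistently by either a left or a right multiplication by $D_\varepsilon$; the $\frac12$ and $-1$ in the stated bound come precisely from how one distributes the diagonal when decomposing $T_\varepsilon(x)$ into an upper and a lower contribution, and from comparing the strict triangular projection with $T_E$ (which includes the diagonal). Getting the constant exactly as stated — rather than a slightly worse one — requires choosing the decomposition $x = L + U'$ with the diagonal split so that each half is an average of $T_E$ applied after suitable unitary conjugations; I would verify that $(x_{ij}1_{i\geq j})$ and $(x_{ij}1_{i\leq j})$ are each exactly $\|T_E\|$-bounded (by symmetry under transposition, which is a complete isometry on $S_p[E]$), and that $(x_{ij}1_{i>j}) = \tfrac12\big[(x_{ij}1_{i\geq j}) + (x_{ij}1_{i\geq j}) - \text{diag}(x)\big]$-type manipulations only cost the advertised additive $\tfrac12$. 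Density of finitely supported elements in $S_p[E]$ for $1<p<\infty$ then extends both inequalities to all of $S_p[E]$.
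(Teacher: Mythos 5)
Your proposal is correct and follows essentially the same route as the paper: the bound $\|T_E\|\leq K_p(E)$ comes from averaging $D_\varepsilon T_\varepsilon(x)$ over signs exactly as in Remark \ref{integral}, and the bound $K_p(E)\leq 2\|T_E\|+1$ comes from splitting $x$ into triangular parts on which the Schur multiplier $\varepsilon_{\max(i,j)}$ acts as left, respectively right, multiplication by the unitary $D_\varepsilon$ (the paper uses the decomposition $x=T_Ex+T_E^{-}x-\mathrm{diag}(x)$ rather than lower plus strict upper, but the bookkeeping and the resulting constant are identical). Both arguments, yours and the paper's, reduce the equivalence of the two $\text{OUMD}_p$ notions to the previously established fact that $\text{OUMD}_p$ is equivalent to boundedness of $T_E$ on $S_p[E]$.
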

\begin{proof}
Assume that $E$ is $\text{OUMD}_p$. Then $S_p[E]$ is $\text{UMD}$ and the triangular projection $T_E$ is bounded. Let $T_E^{-}$ be the triangular projection defined by $(x_{ij}) \mapsto (x_{ij}1_{j\leq i})$, it is clear that $\|T_E\| = \|T_E^{-}\|$. We have
\begin{displaymath}
d_n x = d_nT_E x + d_n T_E^{-} - D_n x, 
\end{displaymath}
where $D_n x = e_{nn} x e_{nn}$. Thus 
\begin{eqnarray}\nonumber
\| \sum \varepsilon_n d_n x \|_{S_p[E]} &\leq & \|\sum \varepsilon_n d_n T_E x \|_{S_p[E]} + \|\sum \varepsilon_n d_n T_E^{-} x \|_{S_p[E]} \\
& &+ \|\sum \varepsilon_n D_n x \|_{S_p[E]}\nonumber.
\end{eqnarray}
Since $d_n T_E x$ is the $n$-th column of $T_E x$, it is easy to see
\begin{displaymath}
\|\sum \varepsilon_n d_n T_E x \|_{S_p[E]} = \|\sum d_n T_E x \|_{S_p[E]} = \|T_E x \|_{S_p[E]} \leq \|T_E\| \|x\|_{S_p[E]}.
\end{displaymath}
The same reason shows that 
\begin{displaymath}
\|\sum \varepsilon_n d_n T_E^{-} x \|_{S_p[E]}=\|\sum d_n T_E^{-} x \|_{S_p[E]}=\|T_E^{-} x \|_{S_p[E]}\leq \|T_E^{-}\| \|x\|_{S_p[E]}.
\end{displaymath}
For the third term, we have obviously that 
\begin{displaymath}
\|\sum \varepsilon_n D_n x \|_{S_p[E]}= \|\sum D_n x \|_{S_p[E]}\leq \|x\|_{S_p[E]}.
\end{displaymath}
Combining these inequalities, we have 
\begin{displaymath}
\| \sum \varepsilon_n d_n x \|_{S_p[E]} \leq (\|T_E\|+\|T_E^{-}\| +1) \|x\|_{S_p[E]} =  (2\|T_E\|+1) \|x\|_{S_p[E]}.
\end{displaymath}
So $E$ is $\text{OUMD}_p$ with respect to the canonical matrix filtration with $K_p(E) \leq 2\|T_E\|+1$.

Conversely, assume that $E$ is $\text{OUMD}_p$ with respect to the canonical matrix filtration. We shall show that $E$ is $\text{OUMD}_p$. It suffices to show that the triangular projection $T_E$ is bounded. According to the remark \ref{integral}, we have
\begin{displaymath}
\| (x_{ij} 1_{i \geq j} )\|_{S_p[E]} \leq \int \| D_\varepsilon T_\varepsilon (x) \|_{S_p[E]} d \varepsilon \leq K_p(E) \| x \|_{S_p[E]},
\end{displaymath}
proving that $\| T_E^{-} \| \leq K_p(E)$, and hence $\| T_E \| \leq K_p(E)$.
\end{proof}

\begin{rem}
We have a slightly better estimation for $\| T_E \|$ as the following
\begin{displaymath}
\frac{1}{2}(K_p(E) - 1) \leq  \| T_E \| \leq \frac{1}{2}(K_p(E) + 1).
\end{displaymath}
We omit the proof here.
\end{rem}

\section*{Appendix}
In this appendix, we quickly review the results from \cite{Musat1} that are unaffected by the already  mentioned gap.

We first recall that it follows from \cite{Pisier_Xu_en} that for any $1< p< \infty$, $S_p$ has $\text{OUMD}_p$.
\begin{thm}[Musat]\label{easy_fact}
Let $1< p, q < \infty$. Then $$C_p \text{ has } \text{OUMD}_q.$$
\end{thm}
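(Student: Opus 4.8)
The plan is to invoke Musat's equivalence and reduce the statement to: for all $1<p,q<\infty$, the Banach space $S_q[C_p]$ is $\text{UMD}$. By Proposition \ref{description_S_p_E} we have $S_q[C_p]=C_q\otimes_h C_p\otimes_h R_q$, a triple Haagerup product of column spaces (recall $R_q=C_{q'}$). I will realise this as a complex interpolation space whose two endpoints are $\text{UMD}$ spaces, and then apply the interpolation inequality for $\text{UMD}$ constants already used in the last Proposition of Section \ref{main}, namely $\beta_r\big((X_0,X_1)_\theta\big)\le\beta_r(X_0)^{1-\theta}\beta_r(X_1)^{\theta}$.

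I would split into two cases, which together exhaust all pairs $(p,q)$: either $|\tfrac1p-\tfrac12|\le|\tfrac1q-\tfrac12|$, or the reverse inequality holds. In the first case $C_p$ lies on the arc of the $C$-scale joining $C_q$ to $C_{q'}=R_q$, so that $C_p=(C_q,R_q)_\theta$ for a suitable $\theta\in[0,1]$. Keeping the outer factors fixed and using Kouba's theorem, $S_q[C_p]=\big(C_q\otimes_h C_q\otimes_h R_q,\ C_q\otimes_h R_q\otimes_h R_q\big)_\theta=\big(S_q[C_q],S_q[R_q]\big)_\theta$. Since $C_q$ and $R_q$ embed completely isometrically into $S_q$ and $S_q[\,\cdot\,]$ preserves completely isometric inclusions (injectivity of $\otimes_h$), both endpoints embed completely isometrically into $S_q[S_q]$, which is $\text{UMD}$ by Musat's equivalence together with the Pisier--Xu fact that $S_q$ is $\text{OUMD}_q$ (recalled at the start of the Appendix). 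Hence both endpoints are $\text{UMD}$, and therefore so is $S_q[C_p]$.

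In the second case the roles of $p$ and $q$ are exchanged: $C_q$ now lies between $C_p$ and $C_{p'}=R_p$, so $C_q=(C_p,R_p)_\gamma$ for some $\gamma\in[0,1]$ and, by conjugation, $R_q=(R_p,C_p)_\gamma$. Keeping the middle factor fixed and using Kouba's theorem, $S_q[C_p]=\big(C_p\otimes_h C_p\otimes_h C_{p'},\ C_{p'}\otimes_h C_p\otimes_h C_p\big)_\gamma$. Now the identity $C_p\otimes_h C_p\simeq C_p$ of Remark \ref{rem}, together with $C_{p'}=R_p$ and $C_p=R_{p'}$, collapses each endpoint to a Schatten class: $C_p\otimes_h C_p\otimes_h C_{p'}\simeq C_p\otimes_h R_p=S_p$ and $C_{p'}\otimes_h C_p\otimes_h C_p\simeq C_{p'}\otimes_h R_{p'}=S_{p'}$, both $\text{UMD}$ because $1<p,p'<\infty$. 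The interpolation inequality then yields $\beta_r(S_q[C_p])\le\beta_r(S_p)^{1-\gamma}\beta_r(S_{p'})^{\gamma}<\infty$.

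The point I expect to require care is the second case. The compatible couple $\big(C_p\otimes_h C_p\otimes_h C_{p'},\,C_{p'}\otimes_h C_p\otimes_h C_p\big)$ produced by Kouba's theorem is \emph{not} the standard couple $(S_p,S_{p'})$: although each endpoint is completely isometric to a Schatten class, the two identifications use the isomorphism $\ell_2\otimes_2\ell_2\simeq\ell_2$ on different tensor legs and so need not agree on the overlap. This is immaterial for the argument, since the interpolation inequality for $\text{UMD}$ constants only uses that the endpoints are $\text{UMD}$ spaces and not the particular compatible-couple structure; but it is exactly here that the hypothesis $1<p<\infty$ enters (for $p=\infty$ the endpoints would be $S_\infty$ and $S_1$, which are not $\text{UMD}$, consistently with the fact that Ruan's problem is open). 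The remaining points are routine: that $\theta,\gamma$ indeed lie in $[0,1]$ (this is precisely the case distinction), and the degenerate values $\theta,\gamma\in\{0,1\}$, e.g. $p=q$, where $S_q[C_q]\simeq S_q$ directly.
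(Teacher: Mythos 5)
Your argument is correct, and it runs on the same fuel as the paper's proof: the Pisier--Xu fact that $S_p$ is $\text{OUMD}_p$, which supplies the two ``diagonal'' families of known points $(\frac{1}{p},\frac{1}{p})$ and $(\frac{1}{p'},\frac{1}{p})$, together with stability under complex interpolation via Kouba's theorem. The packaging, however, is genuinely different. The paper stays at the level of the $\text{OUMD}$ definition: the set $S$ of pairs $(\frac{1}{p},\frac{1}{q})$ for which $C_p$ is $\text{OUMD}_q$ is convex, and the two diagonals already have the whole open square as their convex hull, so a single two-parameter interpolation (in which both the space $C_p$ and the martingale exponent $q$ move simultaneously) finishes the proof; it needs only that $\text{OUMD}_p$ passes to subspaces, not Musat's equivalence. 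You instead pass to the Banach space $S_q[C_p]=C_q\otimes_h C_p\otimes_h R_q$ and replace the planar convexity by two axis-parallel one-parameter interpolations: vertical segments (fixed $q$, middle factor interpolated between $C_q$ and $R_q$) when $|\frac{1}{p}-\frac{1}{2}|\le|\frac{1}{q}-\frac{1}{2}|$, and horizontal segments (fixed $p$, outer factors interpolated between $(C_p,R_p)$ and $(R_p,C_p)$) otherwise; these segments sweep out the square exactly as the convex hull does. What your route buys is that the UMD exponent $r$ never moves, so only the elementary inequality $\beta_r((X_0,X_1)_\theta)\le\beta_r(X_0)^{1-\theta}\beta_r(X_1)^{\theta}$ is needed, plus the concrete identification of the Case 2 endpoints with $S_p$ and $S_{p'}$ via $C_p\otimes_h C_p\simeq C_p$ from Remark \ref{rem} (equivalently, those endpoints are $S_p[C_p]\subset S_p[S_p]$ and $S_{p'}[C_p]\subset S_{p'}[S_{p'}]$, so Case 2 is really the horizontal analogue of Case 1). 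The cost is the case analysis and the reliance on Musat's unpublished equivalence, which the paper's proof of this particular theorem avoids; your cautionary remark about the nonstandard compatible couple in Case 2 is accurate and correctly dismissed, since the $\beta_r$ interpolation inequality only requires the endpoints to be UMD.
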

\begin{proof}
Let $$S = \Big \{ (\frac{1}{p},\frac{1}{q}) \in  (0, 1)\times (0, 1): C_p \text{ has } \text{OUMD}_q \Big\}.$$ We need to show that $$ S = (0, 1)\times (0, 1).$$  By complex interpolation, it is clear that $S$ is convex. Since $C_p$ and $C_{p'} = R_p$ are subspaces of $S_p$, by \cite{Pisier_Xu_en}, both of them have $\text{OUMD}_p$.  Hence $(\frac{1}{p}, \frac{1}{p}) \in S$ and $(\frac{1}{p'}, \frac{1}{p})\in S$ for all $1< p < \infty$.  The result now follows since $$(0, 1) \times (0, 1) = \text{conv}\Big(\{(\frac{1}{p}, \frac{1}{p}): 1 < p < \infty \} \cup\{(\frac{1}{p'}, \frac{1}{p}): 1 < p < \infty \} \Big).$$
\end{proof}

\begin{thm}[Musat]
Let $L$ be the set of interior points of the closed convex set $\text{conv}\Big\{ (0, 0), (1, 1), (\frac{1}{2}, 0), (\frac{1}{2}, 1) \Big\}$. Then $$S_p \text{ has } \text{OUMD}_q \text{ if } (\frac{1}{p}, \frac{1}{q}) \in L.$$
\end{thm}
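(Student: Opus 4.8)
The plan is to prove that the set $S:=\{(\frac1p,\frac1q)\in(0,1)\times(0,1):S_p\text{ has }\text{OUMD}_q\}$ is a convex set which contains the two diagonals of the parallelogram $\overline L:=\mathrm{conv}\{(0,0),(1,1),(\frac12,0),(\frac12,1)\}$; since the open parallelogram $L$ is exactly the convex hull of those two open diagonals, this will give $S\supseteq L$, the desired statement. For the convexity, recall that $S_p$ has $\text{OUMD}_q$ if and only if the Banach space $S_q[S_p]$ is $\text{UMD}$ (Musat's equivalence mentioned in the introduction). Given admissible pairs $(\frac1{p_0},\frac1{q_0})$ and $(\frac1{p_1},\frac1{q_1})$ and $\theta\in(0,1)$, put $\frac1{p_\theta}=\frac{1-\theta}{p_0}+\frac{\theta}{p_1}$ and likewise for $q_\theta$. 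Using $S_q[E]=C_q\otimes_h E\otimes_h R_q$ (Proposition \ref{description_S_p_E}), the interpolation identities $C_{q_\theta}=(C_{q_0},C_{q_1})_\theta$, $R_{q_\theta}=(R_{q_0},R_{q_1})_\theta$ and $S_{p_\theta}=(S_{p_0},S_{p_1})_\theta$, and Kouba's theorem (Theorem \ref{Kouba}), one gets a complete isometry $S_{q_\theta}[S_{p_\theta}]=(S_{q_0}[S_{p_0}],S_{q_1}[S_{p_1}])_\theta$. Since the $\text{UMD}$ property is stable under complex interpolation (with $\beta_r(X_\theta)\le\beta_r(X_0)^{1-\theta}\beta_r(X_1)^\theta$), we deduce $(\frac1{p_\theta},\frac1{q_\theta})\in S$; hence $S$ is convex.

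It remains to put the two diagonals of $\overline L$ into $S$. The diagonal joining $(0,0)$ and $(1,1)$ is the segment $\{(t,t):0<t<1\}$, and it lies in $S$ because $S_p$ has $\text{OUMD}_p$ for all $1<p<\infty$ (Pisier--Xu, \cite{Pisier_Xu_en}). For the diagonal joining $(\frac12,0)$ and $(\frac12,1)$, i.e. the segment $\{(\frac12,s):0<s<1\}$, it suffices to show that $S_2$ has $\text{OUMD}_q$ for every $1<q<\infty$. Here I would use that $S_2=C_2\otimes_h R_2$ together with the complete isometries $R_2\simeq C_{2'}=C_2$ and $C_2\otimes_h C_2\simeq C_2$ (Remark \ref{rem} and \eqref{iso}) to conclude $S_2\simeq C_2$ as operator spaces, whence $S_q[S_2]\simeq C_q\otimes_h C_2\otimes_h R_q=S_q[C_2]$; the latter is $\text{UMD}$ since $C_2$ has $\text{OUMD}_q$ by Theorem \ref{easy_fact}. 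Thus $\{(\frac12,s):0<s<1\}\subseteq S$ as well.

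Finally, the two segments obtained are precisely the open diagonals of the parallelogram $\overline L$, so their convex hull equals $L$; since $S$ is convex and contains both, $S\supseteq L$, which proves the theorem. I expect the main things to be careful about to be the interpolation identity $S_{q_\theta}[S_{p_\theta}]=(S_{q_0}[S_{p_0}],S_{q_1}[S_{p_1}])_\theta$ used in the convexity step -- one needs the couples involved to be compatible inside the ambient vector-valued noncommutative $L_p$ framework of \cite{Pisier2} and Kouba's theorem to apply at each factor -- and the operator-space identification $S_2\simeq C_2$; the geometric reduction of $L$ to its two diagonals is elementary planar geometry.
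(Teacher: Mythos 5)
Your proposal is correct and follows essentially the same route as the paper: show the set of admissible pairs $(\frac{1}{p},\frac{1}{q})$ is convex by interpolation, put the diagonal $\{(t,t)\}$ in it via Pisier--Xu, put the segment $\{(\frac12,s)\}$ in it via $S_2\simeq C_2$ and Theorem \ref{easy_fact}, and observe that $L$ is the convex hull of these two segments. The only cosmetic differences are that you derive $S_2\simeq C_2$ from the Haagerup tensor product identities rather than citing $S_2\simeq OH\simeq C_2$ directly, and that you spell out the convexity and the planar-geometry step that the paper leaves implicit.
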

\begin{proof}
We have $$L=  \text{conv}\Big(\{(\frac{1}{p}, \frac{1}{p}): 1 < p < \infty \} \cup\{(\frac{1}{2}, \frac{1}{q}): 1 < q < \infty \} \Big).$$
As operator spaces, we have completely isometrically $$S_2 \simeq OH \simeq C_2,$$  hence by Theorem \ref{easy_fact}, $S_2$ has $\text{OUMD}_q$ for all $1< q < \infty$. If we define $$T = \Big \{ (\frac{1}{p},\frac{1}{q}) \in  (0, 1)\times (0, 1): S_p \text{ has } \text{OUMD}_q \Big\},$$ then $T$ is convex and contains all the points $(\frac{1}{p}, \frac{1}{p}), 1< p< \infty$ and all the points $(\frac{1}{2}, \frac{1}{q}), 1< q < \infty$.  Hence $$L \subset T.$$
\end{proof}

\section*{Acknowledgements}
The author is extremely grateful to his advisor Gilles Pisier for helpful discussions. He also thanks Quanhua Xu for the helpful suggestion during the preparation of this paper. He would like thank Javier Parcet for explaining carefully the gap in \cite{Musat1} to him.


\begin{thebibliography}{10}

\bibitem{Interpolation}
J{\"o}ran Bergh and J{\"o}rgen L{\"o}fstr{\"o}m.
\newblock {\em Interpolation spaces. {A}n introduction}.
\newblock Springer-Verlag, Berlin, 1976.
\newblock Grundlehren der Mathematischen Wissenschaften, No. 223.

\bibitem{Bourgain}
J.~Bourgain.
\newblock Some remarks on {B}anach spaces in which martingale difference
  sequences are unconditional.
\newblock {\em Ark. Mat.}, 21(2):163--168, 1983.

\bibitem{Burkholder1}
D.~L. Burkholder.
\newblock A geometric condition that implies the existence of certain singular
  integrals of {B}anach-space-valued functions.
\newblock In {\em Conference on harmonic analysis in honor of {A}ntoni
  {Z}ygmund, {V}ol. {I}, {II} ({C}hicago, {I}ll., 1981)}, Wadsworth Math. Ser.,
  pages 270--286. Wadsworth, Belmont, CA, 1983.

\bibitem{Burkholder2}
Donald~L. Burkholder.
\newblock Martingales and {F}ourier analysis in {B}anach spaces.
\newblock In {\em Probability and analysis ({V}arenna, 1985)}, volume 1206 of
  {\em Lecture Notes in Math.}, pages 61--108. Springer, Berlin, 1986.

\bibitem{Burkholder3}
Donald~L. Burkholder.
\newblock Martingales and singular integrals in {B}anach spaces.
\newblock In {\em Handbook of the geometry of {B}anach spaces, {V}ol. {I}},
  pages 233--269. North-Holland, Amsterdam, 2001.

\bibitem{Junge_Xu_const}
Marius Junge and Quanhua Xu.
\newblock On the best constants in some non-commutative martingale
  inequalities.
\newblock {\em Bull. London Math. Soc.}, 37(2):243--253, 2005.

\bibitem{Non_BR}
Marius Junge and Quanhua Xu.
\newblock Noncommutative {B}urkholder/{R}osenthal inequalities. {II}.
  {A}pplications.
\newblock {\em Israel J. Math.}, 167:227--282, 2008.

\bibitem{Musat1}
Magdalena Musat.
\newblock On the operator space {UMD} property for noncommutative
  {$L_p$}-spaces.
\newblock {\em Indiana Univ. Math. J.}, 55(6):1857--1891, 2006.

\bibitem{Ricard_transfer}
S.~{Neuwirth} and {\'E}.~{Ricard}.
\newblock {Transfer of Fourier multipliers into Schur multipliers and sumsets
  in a discrete group}.
\newblock {\em To appear in Canadian J. Math}.

\bibitem{Pisier5}
Gilles Pisier.
\newblock The operator {H}ilbert space {${\rm OH}$}, complex interpolation and
  tensor norms.
\newblock {\em Mem. Amer. Math. Soc.}, 122(585):viii+103, 1996.

\bibitem{Pisier2}
Gilles Pisier.
\newblock Non-commutative vector valued {$L_p$}-spaces and completely
  {$p$}-summing maps.
\newblock {\em Ast\'erisque}, 247:vi+131, 1998.

\bibitem{Pisier1}
Gilles Pisier.
\newblock {\em Introduction to operator space theory}, volume 294 of {\em
  London Mathematical Society Lecture Note Series}.
\newblock Cambridge University Press, Cambridge, 2003.

\bibitem{Pisier_Xu}
Gilles Pisier and Quanhua Xu.
\newblock In\'egalit\'es de martingales non commutatives.
\newblock {\em C. R. Acad. Sci. Paris S\'er. I Math.}, 323(7):817--822, 1996.

\bibitem{Pisier_Xu_en}
Gilles Pisier and Quanhua Xu.
\newblock Non-commutative martingale inequalities.
\newblock {\em Comm.Math.Phy}, 189:667, 1997.

\bibitem{Q2}
Yanqi Qiu.
\newblock On the {UMD} constants for a class of iterated {$L_p(L_q)$} spaces.
\newblock {\em J. Funct. Anal.}, 263(8):2409--2429, 2012.

\bibitem{Ruan}
Zhong-Jin Ruan.
\newblock Subspaces of {$C^*$}-algebras.
\newblock {\em J. Funct. Anal.}, 76(1):217--230, 1988.

\end{thebibliography}

\def\cprime{$'$}

\end{document}